\definecolor{green}{rgb}{0.1,1,0.1}
\newcommand{\todoMZ}[2][]{\todo[color=green!60, #1]{Manos: #2}}
\newtheorem{thm}{Theorem}
\newtheorem{lem}[thm]{Lemma}
\theoremstyle{definition}
\newcommand{\ve}{\varepsilon}
\newtheorem*{xrem}{Remark}
\newcommand{\ls}{\leqslant}
\newcommand{\gs}{\geqslant}
\newcommand{\one}{\mathds{1}}
\newcommand{\xn}{(x_n)_{n \in \mathbb{N}} }
\newcommand{\yn}{(y_n)_{n \in \mathbb{N}} }
\newcommand{\zn}{(z_n)_{n \in \mathbb{N}} }
\def \bN {\mathbb N}
\def \bR {\mathbb R}
\def \bs {\mathbf{s}}
\def \bt {\mathbf{t}}
\def \ba {\mathbf{a}}
\def \bb {\mathbf{b}}
\def \bj {\mathbf{j}}
\def \bc {\mathbf{c}}
\newcommand{\cR}{\mathcal{R}}
\newcommand{\fX}{\mathfrak{X}}
\begin{document}
\baselineskip=17pt
\title{Weak Poissonian Correlations }
\author{Manuel Hauke}
\address{TU Graz, Austria}
\email{hauke@math.tugraz.at}
\author[A. Zafeiropoulos]{Agamemnon Zafeiropoulos}
\address{NTNU Trondheim, Norway}
\email{agamemnon.zafeiropoulos@ntnu.no}

\date{}
\thanks{AZ is supported by a postdoctoral fellowship funded by Grant 275113 of the Research Council of Norway} 

\begin{abstract} 
We examine a property of sequences called Poissonian pair correlations with parameter $0\ls \beta \ls 1$ (abbreviated as $\beta$\,--\,PPC). We prove that when $\beta<1,$ the property of $\beta$\,--\,PPC, also known as weak Poissonian correlations, can be detected at the behaviour of sequences at small scales, and show that this does not happen for the classical notion of PPC, that is, when $\beta = 1$. Furthermore, we show that whenever $0\ls \alpha < \beta \ls 1$, $\beta$\,--\,PPC is stronger than $\alpha$\,--\,PPC. We also include a discussion on weak Poissonian correlations of higher orders, showing that for $\beta < 1$, Poissonian $\beta$\,--\,correlations of  order $k+1$ imply  Poissonian $\beta$\,--\,correlations of $k$\,--\,th order with the same parameter $\beta$. 
\end{abstract}

\subjclass[2010]{Primary 11K06, 11J71; Secondary 11K99}
\maketitle

\section{Introduction and statement of results}
\subsection{Introduction} Let $\xn\subseteq [0,1]$ be a sequence of points in the unit interval.  For $s > 0$ and for any positive integer $N \gs 1,$ we define the {\it pair correlation function} of the sequence $\xn$ to be 
\[ R_2(s,N) = \frac{1}{N}\#\Big\{m,n \ls N, m\neq n : \|x_m-x_n\| \ls \frac{s}{N} \Big\},  \]
where $\|x\|$ denotes the distance of $x\in \mathbb{R}$ to the nearest integer (see Section \ref{notation} for a proper definition).
We say that the sequence $\xn$ has {\it Poissonian pair correlations } (from now on abbreviated as PPC) if 
\begin{equation} \label{PPC}
\lim_{N\to \infty} R_2(s,N) = 2s \qquad \text{ for all } s>0.  
\end{equation}
The notion of pair correlations of sequences has been studied in various contexts. Its natural connection with mathematical physics is exhibited by the famous Berry\,--\,Tabor conjecture \cite{bt}. A series of more recent papers have studied pair correlations from a purely theoretical point of view. To mention an example, it was an open problem within the theoretical setup to determine  the relation of Poissonian pair correlations with uniform distribution. It has been recently shown that any sequence $\xn$ with PPC is also uniformly distributed mod $1,$ that is, we have 
\[ \lim_{N\to \infty} \frac{1}{N}\# \{i\ls N : x_i \in [a,b] \} = b-a \quad \text{ for all }\, 0\ls a < b \ls 1. \]
This result was established by Aistleiter, Larcher and Lewko \cite{all} and independently by Grepstad and Larcher \cite{sigrid}, while a subsequent proof was also given by Steinerberger \cite{steinerberger} and, in a much more general setup, by Marklof \cite{marklof}. \par
In the present paper, we focus our attention on the notion of weak Poissonian pair correlations with parameter $0\ls \beta \ls 1$. As we shall soon explain, this notion forms a weaker variant of the classical property of PPC, a fact which explains the term ``weak''. Our main purpose is to compare it with the standard property of Poissonian pair correlations and demonstrate several  differences. To the best of our knowledge, the notion of weak Poissonian pair correlations  was first introduced by Nair and Pollicot in \cite{nair}. 
\par
	Let $N \gs 1, 0 \ls \beta \ls 1$ and $s>0$. We define the \textit{pair correlation function with parameter $\beta$} of a sequence $(x_n)_{n \in \mathbb{N}}\subseteq [0,1]$ to be 
	\begin{equation*}
	R_2(\beta;s,N) = \frac{1}{N^{2 - \beta}} \#\Big\{m,n \ls N, m\neq n : \|x_m-x_n\| \ls \frac{s}{N^{\beta}} \Big\}. 
	\end{equation*}
From a statistical perspective, the variants of the pair correlation function (with a different scaling  factor) which are considered in the present paper fall into the framework of Ripley's K-function \cite{ripley}. 
\par We say that the sequence $(x_n)_{n \in \mathbb{N}}$ has \emph{weak Poissonian pair correlations with parameter $0 < \beta < 1$} (also called \emph{Poissonian $\beta$\,--\,pair correlations}, or, for abbreviation, $\beta$\,--\,PPC) if 
\begin{equation}\label{weak_defi}
\lim_{N \to \infty} R_2(\beta;s,N) = 2s \qquad \text{ for  all }\, s> 0. \end{equation} 
For the value $\beta=0$, we say that the sequence $(x_n)_{n \in \mathbb{N}}$ has $0$\,--\,PPC if
\begin{equation*}
\lim_{N \to \infty} R_2(\beta;s,N) = 2s \qquad \text{ for  all }\, 0 < s \ls \frac{1}{2}\cdot \end{equation*} 
 The reason why in the definition of $0$\,--\,PPC the values of the scale $s$ are restricted to the range $0< s\ls \frac12$ is quite simple: when $\beta = 0$ we trivially have $R_2(0;s,N) \ls 1$ and thus \eqref{weak_defi} cannot hold for $s > \frac{1}{2}$. \par  
It is clear that for $\beta=1$ the notion of $\beta$\,--\,PPC coincides with the classical Poissonian pair correlations property.  For $\beta=0$, the property of $0$\,--\,PPC appears to be known as being equivalent to uniform distribution, see e.g. \cite[Section 2.2]{steinerberger}. Since we have not managed to find a rigorous explanation in the literature, we provide a proof of this fact.
  
\begin{thm} \label{prop1} Let $(x_n)_{n \in \mathbb{N}} \subseteq [0,1]$ be a sequence. The following are equivalent. 
\begin{enumerate} 
\item[(i)] The sequence $(x_n)_{n \in \mathbb{N}}$ is uniformly distributed mod $1$. 
\item[(ii)] The sequence $\xn$ has $0$\,--\,PPC.
\end{enumerate}
\end{thm}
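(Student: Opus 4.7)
I would prove the two implications separately.

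\emph{The implication} (i) $\Rightarrow$ (ii). Uniform distribution modulo $1$ is equivalent to the vanishing of the (interval) discrepancy $D_N(\xn) := \sup_I \left|\frac{1}{N}\#\{n \ls N : x_n \in I\} - |I|\right|$, where the supremum is taken over subintervals $I\subseteq [0,1]$. Decomposing the pair correlation count by its first coordinate,
\[
R_2(0;s,N) = \frac{1}{N}\sum_{m=1}^{N}\left(\frac{1}{N}\#\{n\ls N : x_n \in [x_m-s, x_m+s] \bmod 1\} - \frac{1}{N}\right);
\]
the bracketed term is the $\mu_N$-measure of a set which, viewed in $[0,1]$, is a union of at most two intervals of total length $2s$. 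Consequently, for every $m$ it differs from $2s$ by at most $2D_N(\xn) + 1/N$. Averaging over $m$ and letting $N \to \infty$ yields $R_2(0;s,N) \to 2s$.

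\emph{The implication} (ii) $\Rightarrow$ (i). I would pass to the autoconvolution of the empirical measure. Let $\mu_N = \frac{1}{N}\sum_{n=1}^{N}\delta_{x_n}$ and $\nu_N = \mu_N \ast \tilde{\mu}_N$ on $\bR/\bZ$, with $\tilde{\mu}_N$ the pushforward of $\mu_N$ under $x \mapsto -x$. A direct count gives
\[
\nu_N([-s,s]) = R_2(0;s,N) + \frac{1}{N},
\]
so the hypothesis (ii) reads $\nu_N([-s,s]) \to 2s$ for every $0 < s \ls 1/2$. Crucially, $\nu_N$ is symmetric under $z \mapsto -z$; by differencing two hypotheses (for $0 \ls a < b \ls 1/2$) and splitting at $0$, one deduces $\nu_N(I) \to \lambda(I)$ for every subinterval $I$ of $\bR/\bZ$ ($\lambda$ denoting Lebesgue measure), which upgrades to weak convergence $\nu_N \to \lambda$ by a standard portmanteau-type argument. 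Passing to Fourier coefficients yields $\widehat{\nu_N}(k) \to 0$ for every $k \neq 0$. But the convolutional structure gives $\widehat{\nu_N}(k) = |\widehat{\mu_N}(k)|^{2}$, so
\[
\frac{1}{N}\sum_{n=1}^{N} e^{2\pi i k x_n} \longrightarrow 0 \qquad \text{for every } k \neq 0,
\]
which is precisely Weyl's criterion for uniform distribution of $\xn$.

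\emph{Main obstacle.} The most delicate step is extracting weak convergence of $\nu_N$ on $\bR/\bZ$ from the one-parameter family of hypotheses on the symmetric arcs $[-s,s]$. The symmetry of $\nu_N$ is essential here: without it the single-parameter family provided by (ii) would not pin down enough of the measure. Once weak convergence is in hand, the passage through the Fourier coefficients and Weyl's criterion is routine.
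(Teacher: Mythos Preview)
Your proof is correct. The direction (i)\,$\Rightarrow$\,(ii) is essentially identical to the paper's: both decompose the pair count by the first index and bound the inner count against $2s$ using the discrepancy.

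For (ii)\,$\Rightarrow$\,(i) you take a genuinely different route. The paper argues by contradiction via an $L^2$--Cauchy--Schwarz estimate: assuming the sequence is not equidistributed, it finds an interval $[0,a]$ with limiting empirical mass $b\neq a$, and then shows that the integral $I_0(s,N)=\int_0^1 F_0(t,s,N)^2\,\mathrm{d}t$ (which equals $s^2$ in the limit under $0$--PPC, by Lemma~\ref{integral_equivalence}) is forced to be at least $s^2\big(\tfrac{b^2}{a}+\tfrac{(1-b)^2}{1-a}\big)>s^2$. Your argument instead interprets $R_2(0;s,N)+\tfrac1N$ as the measure $\nu_N([-s,s])$ of the symmetric autocorrelation $\nu_N=\mu_N*\tilde\mu_N$, uses the symmetry of $\nu_N$ together with a compactness/portmanteau argument to upgrade the one-parameter hypothesis to weak convergence $\nu_N\to\lambda$ on $\bR/\bZ$, and then reads off $|\widehat{\mu_N}(k)|^2=\widehat{\nu_N}(k)\to 0$ to conclude via Weyl's criterion. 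This is clean and conceptually appealing; the one point worth spelling out is that knowing $\nu_N([-s,s])\to 2s$ for all $s$ forces any subsequential weak limit $\nu$ to satisfy $\nu([-s,s])=2s$ (squeeze between $\nu((-s,s))\ls 2s\ls \nu([-s,s])$ and then let $s'\uparrow s$, $s''\downarrow s$), which together with the symmetry pins down $\nu=\lambda$. The paper's approach, by contrast, is tailored to the machinery ($F_\beta$, $I_\beta$) used throughout --- the same Cauchy--Schwarz template reappears in the proofs of Theorems~\ref{small_scales_suffice} and~\ref{beta_implies_alpha}, and (as the paper notes in a remark) it also adapts to show that $R_2(s,N)\to 2s$ for all $s\gs s_0$ already implies equidistribution.
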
 

We now turn our attention to a rather striking aspect of weak Poissonian correlations which, in our opinion, constitutes the most remarkable difference of $\beta$\,--\,PPC on the one hand, and the classical property of PPC, on the other. We show that in order for a given sequence $\xn$ to have $\beta$\,--\,PPC for some $0\ls \beta <1$, it is sufficient that the defining condition $\lim\limits_{N \to \infty} R_2(\beta;s,N) = 2s$ holds for all values of $s$ that lie within an interval of the form $(0,s_0),$ where $s_0>0$ can be chosen to be arbitrarily small. 
\begin{thm}\label{small_scales_suffice}
Let $0 \ls \beta < 1$.  Assume there exists some constant $s_0>0$ such that 
\begin{equation}
\lim_{N \to \infty}R_2(\beta;s,N) = 2s \qquad \text{ for all } s < s_0.\end{equation}
Then the sequence $\xn$ has $\beta$\,--\,PPC.
\end{thm}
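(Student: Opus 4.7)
The plan is to combine two consequences of the small-scale hypothesis: uniform distribution of $\xn$ modulo $1$, and an $L^2$-bound on the point counts in small arcs. Together these should determine $R_2(\beta;s,N)$ at every scale.

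\textbf{Step 1 (uniform distribution).} Setting $\mu_N:=\frac1N\sum_{n\ls N}\delta_{x_n}$, one has
\[ R_2(\beta;s,N)+N^{\beta-1}\;=\;N^\beta(\mu_N*\check\mu_N)\bigl([-s/N^\beta,s/N^\beta]\bigr). \]
If $\xn$ were not UD, passing to a subsequence we may assume $\mu_N\to\mu$ weakly with density $f\not\equiv 1$ (or $\mu$ singular). Since $\beta<1$ the term $N^{\beta-1}\to 0$, and a vague-limit / Fatou argument gives $\liminf_N R_2(\beta;s,N)\gs 2s\int_0^1 f^2\,dx$, which by Jensen's inequality strictly exceeds $2s$ unless $f\equiv 1$, contradicting the hypothesis for $s<s_0$.

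\textbf{Step 2 (partition and counting).} Fix $s>0$ and $c<s_0$. Partition $[0,1)$ into $K=\lceil N^\beta/c\rceil$ arcs $I_j$ of length $\ls c/N^\beta$, and put $N_j:=\#\{n\ls N:x_n\in I_j\}$. Since two points in the same arc have distance $\ls c/N^\beta$, the hypothesis at scale $c$ yields $\sum_j N_j^2 \ls N^{2-\beta}R_2(\beta;c,N)+N = (2c+o(1))N^{2-\beta}$. Any pair contributing to $N^{2-\beta}R_2(\beta;s,N)$ has endpoints in arcs of cyclic distance $\ls M:=\lceil s/c\rceil+O(1)$, so
\[ N^{2-\beta}R_2(\beta;s,N)\;=\;\sum_{|i-j|_K\ls M} N_iN_j\;-\;N\;+\;O(cN^{2-\beta}), \]
where $|\cdot|_K$ denotes cyclic distance modulo $K$. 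Invoking UD from Step 1 together with the $L^2$-bound above, the double sum should evaluate to $(2s+O(c))N^{2-\beta}(1+o(1))$; sending $N\to\infty$ and then $c\to 0$ gives $R_2(\beta;s,N)\to 2s$.

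\textbf{Main obstacle.} The delicate part is evaluating the double sum $\sum_{|i-j|_K\ls M} N_iN_j$: weak UD only controls the $N_j$'s at macroscopic scales, whereas the arcs have microscopic length $c/N^\beta$. Pinning down the autocorrelations $\sum_j N_jN_{j+k}$ for offsets $|k|\ls M$ requires combining the macroscopic UD from Step 1 with the $L^2$-bound from Step 2, presumably through a Cauchy--Schwarz type decomposition. That the hypothesis holds for a continuum of scales $s<s_0$ (and not merely a single scale) appears essential, as one can then invoke the $L^2$-bound for several values of $c$ simultaneously to close the estimate.
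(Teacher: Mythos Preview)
Your proposal leaves the decisive step unresolved, and the ingredients you list do not close it. From Step~2 you only get $\sum_j N_j^2 \ls (2c+o(1))N^{2-\beta}$ (pairs in the same arc are at distance $\ls c/N^\beta$, hence counted by $R_2(\beta;c,N)$), and then Cauchy--Schwarz on the bilinear sum yields
\[
\sum_{|i-j|_K\ls M} N_iN_j \ \ls\ (2M+1)\sum_j N_j^2 \ \ls\ \Bigl(\tfrac{2s}{c}+O(1)\Bigr)(2c+o(1))N^{2-\beta} \ =\ (4s+O(c))N^{2-\beta},
\]
so at best $\limsup_N R_2(\beta;s,N)\ls 4s$, off by a factor of $2$. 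Uniform distribution from Step~1 does not rescue this: it controls the $N_j$ only after summing over a macroscopic block of consecutive arcs, whereas you need the individual autocorrelations $\sum_j N_jN_{j+k}$ for $|k|\ls M$. The factor-of-$2$ loss is structural: $R_2(\beta;c,N)$ also counts pairs straddling arc boundaries, so your upper bound on $\sum_j N_j^2$ is roughly twice the true value (which is about $cN^{2-\beta}$).

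The paper sidesteps this by replacing the fixed partition with the sliding count $F_\beta(t,s,N)$ and its $L^2$-average $I_\beta(s,N)=\int_0^1 F_\beta(t,s,N)^2\,dt$. Via Lemma~\ref{integral_equivalence} the hypothesis is equivalent to $I_\beta(s,N)\to s^2$ for all $s<s_0$, and Lemma~\ref{integral_lower_bound} gives $I_\beta(s,N)\gs s^2$ always; so one only needs $\limsup_N I_\beta(s,N)\ls s^2$ for every $s$. Covering a window of width $s/N^\beta$ by $K+1$ translates of width $(s/K)/N^\beta$ and applying Cauchy--Schwarz gives $I_\beta(s,N)\ls (K+1)^2 I_\beta(s/K,N)$, because after integrating over $t$ all translates contribute equally by translation invariance. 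For $K$ large enough $s/K<s_0$, so the right side tends to $(K+1)^2(s/K)^2$; letting $K\to\infty$ finishes. The key point is that $I_\beta(s',N)\to (s')^2$ scales \emph{quadratically} in $s'$, exactly cancelling the $(K+1)^2$ from Cauchy--Schwarz --- precisely what your linear bound on $\sum_j N_j^2$ cannot do. Averaging your partition over all shifts in $[0,c/N^\beta)$ would recover $I_\beta$ and hence the paper's argument; your Step~1 is then not needed at all.
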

 The key that served as a hint for this surprising fact was the proof of Theorem \ref{prop1}. As the reader will be able to see, to deduce uniform distribution from the hypothesis of 0\,--\,PPC in the proof of Theorem \ref{prop1}, we only employ the fact that $R_2(0;s,N)\to 2s$ for all values of $s>0$ that are sufficiently small. \par As already alluded to, the assumption that $\beta$ is strictly less than $1$ in Theorem \ref{small_scales_suffice} turns out to be essential. In other words, it is not possible to extend Theorem \ref{small_scales_suffice} to the setup of PPC. We actually prove a rather stronger statement: for any choice of the number $S>0$, we can find a sequence $\xn$ with $R_2(s,N)\to 2s$ for all $s<S$ which is not even uniformly distributed, whence it does not have PPC.

\begin{thm} \label{stronger_not_for_one}
For every $S > 0$, there exists a sequence $\xn$ such that 
     \[ \lim_{N\to\infty} R_2(s,N) = 2s \qquad \text{ for all } s<S \]
     but the sequence $\xn$ is not uniformly distributed, and hence does not have PPC.
\end{thm}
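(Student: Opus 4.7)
The plan is to construct $\xn$ as a superposition of two populations whose pair-correlation contributions at scales $s/N$ with $s<S$ sum to exactly $2s$, while the overall distribution is visibly non-uniform. The first population will be an i.i.d.\ uniform sample on a slightly shrunken interval $[0,b]\subsetneq[0,1]$; the second will be a deterministic sequence in $[0,1/2]$ whose minimum gap is so large that it contributes nothing to $R_2(s,N)$ for $s<S$. Non-uniformity comes from the fact that every $x_n$ lies in $[0,b]$, while the tuning $b=1-\varepsilon^2$ is chosen so that the small-scale pair count comes out exactly to $2s$.

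Concretely, let $(z_k)_{k\in\mathbb{N}}$ be the van der Corput sequence in base $2$ scaled by $1/2$, so that $z_k\in[0,1/2]$ and $\min_{i\neq j\leq M}|z_i-z_j|\geq 1/(8M)$ for every $M\geq 2$. Pick $\varepsilon\in(0,\min(1/(8S),1/2))$ and set $b:=1-\varepsilon^2$; let $(y_n)_{n\in\mathbb{N}}$ be i.i.d.\ uniform on $[0,b]$. With $K:=\lceil 1/\varepsilon\rceil$, define $x_n:=z_{n/K}$ whenever $K\mid n$ and fill the remaining indices in order by $y_1,y_2,\dots$. The choice of $\varepsilon$ guarantees that the roughly $\varepsilon N$ deterministic points among $x_1,\dots,x_N$ are pairwise at distance at least $1/(8\varepsilon N)>S/N$.

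The verification of $R_2(s,N)\to 2s$ for $s<S$ splits the pair count as $R_2^{yy}+R_2^{zz}+R_2^{yz}$ according to whether each endpoint is a $y$- or a $z$-point. The term $R_2^{zz}$ vanishes for $s<S$ by the above separation. Variance bounds of order $1/N$ for the U-statistic counting $y$--$y$ pairs within $s/N$ give $R_2^{yy}(s,N)\to 2s(1-\varepsilon)^2/b$ almost surely; a similar concentration argument for the mixed term yields $R_2^{yz}(s,N)\to 4s\varepsilon(1-\varepsilon)/b$ almost surely. Summing,
\[
R_2(s,N) \;\longrightarrow\; \frac{2s}{b}\bigl[(1-\varepsilon)^2+2\varepsilon(1-\varepsilon)\bigr] \;=\; \frac{2s(1-\varepsilon^2)}{b} \;=\; 2s.
\]
Joint convergence for all $s\in(0,S)$ follows by testing on a countable dense subset of scales and invoking the monotonicity of $s\mapsto R_2(s,N)$.

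Finally, $\xn$ is not uniformly distributed: every $x_n$ lies in $[0,b]$, so the empirical measure converges to a measure assigning mass $0$ to the interval $(b,1]$, which has positive Lebesgue measure. By the PPC $\Rightarrow$ u.d.\ result recalled in the introduction, $\xn$ therefore cannot have PPC. The main technical point in the plan is the almost sure convergence of $R_2^{yy}$ and $R_2^{yz}$ at the shrinking scale $s/N$; this is routine via a second-moment computation for pair statistics in an i.i.d.\ model, and one could equally well derandomize the construction by replacing $(y_n)$ with any explicit deterministic sequence on $[0,b]$ enjoying a sufficiently uniform form of PPC.
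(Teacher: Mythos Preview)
Your construction is essentially the paper's: mix an i.i.d.\ uniform sequence with a (scaled) van der Corput sequence, use the large gaps of the latter to make the $zz$-contribution vanish for $s<S$, and tune a parameter so that the $yy$- and $yz$-contributions sum to exactly $2s$. Your execution is in fact slightly cleaner than the paper's: instead of sampling the $y_n$ uniformly on $[0,1]$, computing $R_2^{\mathfrak X}(s,N)\to 2cs$ for the mixture, and then \emph{contracting} the whole sequence by $c$ (which forces the paper to control an extra ``wraparound'' error term $E^{\mathfrak X}(s,N)$), you sample the $y_n$ directly on $[0,b]$ and choose $b$ so that the balance comes out right in one step.

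One small point needs tightening. The fraction of $z$-points among $x_1,\dots,x_N$ is exactly $1/K$, not $\varepsilon$, so your limits should read
\[
R_2^{yy}(s,N)\to \frac{2s}{b}\Bigl(1-\tfrac{1}{K}\Bigr)^{2},\qquad
R_2^{yz}(s,N)\to \frac{4s}{b}\cdot\frac{1}{K}\Bigl(1-\tfrac{1}{K}\Bigr),
\]
and these sum to $2s(1-1/K^2)/b$. This equals $2s$ only if $b=1-1/K^2$, which need not coincide with your $b=1-\varepsilon^2$ when $K=\lceil 1/\varepsilon\rceil$. The fix is trivial: choose an integer $K>8S$ first, then set $\varepsilon=1/K$ and $b=1-1/K^2$. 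With that adjustment the argument goes through.
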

The statement of Theorem \ref{stronger_not_for_one} is in agreement with a phenomenon that  enthusiasts of the theory of Poissonian correlations might have observed: all existing proofs of the fact that PPC implies uniform distribution \cite{alp,sigrid,steinerberger} at some point use the fact that the convergence $R_2(s,N) \to 2s$ holds for arbitrarily large values of $s>0.$\par In a remark after the proof of Theorem \ref{prop1}, we give a short, elementary proof of the fact that sequences with $R_2(s,N)\to 2s$ for all $s>s_0$ are uniformly distributed \mbox{modulo $1$}. This assumption can actually be relaxed: in order for a sequence to be uniformly distributed mod $1$, it suffices that $R_2(s,N)\to 2s$ holds for all positive integers $s\in \mathbb{N}$. This was proved in \cite{hinrichs} in a multidimensional setup, and later in \cite{steinerberger}, where it was further pointed out that a sequence is equidistributed as long as $R_2(s,N)\to 2s$ holds for all $s$ in a discrete set satisfying some ``maximum gap'' condition. The fact that sequences with $R_2(s,N)\to 2s$ for all $s>s_0$ are uniformly distributed follows directly from this observation in \cite{steinerberger}, but we nevertheless decided to include one more proof because of its simplicity and its underlying connection with the arguments in \cite{alp}.
\par Concerning the relation of weak Poissonian pair correlations with uniform distribution, Steinerberger \cite{steinerberger2, steinerberger} proved that if the sequence $\xn$ has  $\beta$\,--\,PPC for some $0<\beta\ls 1$ then it is uniformly distributed. This showed that the property of $\beta$\,--\,PPC is stronger than $0$\,--\,PPC for any $0<\beta \ls 1,$ extending the already known fact that $1$\,--\,PPC is a stronger property than $0$\,--\,PPC (uniform distribution).
\par  Furthermore, Steinerberger refers in \cite{steinerberger} to  $\beta$\,--\,PPC (for a given $0< \beta < 1)$ as a property that interpolates between uniform distribution and PPC. The following result  describes this phenomenon in a more precise fashion: whenever   $0\ls \alpha < \beta \ls 1,$  the property of $\beta$\,--\,PPC  is stronger than $\alpha$\,--\,PPC.

\begin{thm}\label{beta_implies_alpha}
	Let $0 \ls \alpha < \beta \ls 1.$ If a sequence $\xn\subseteq [0,1]$ has weak Poissonian pair correlations with parameter $\beta$, then it also has weak Poissonian pair correlations with parameter $\alpha$.
\end{thm}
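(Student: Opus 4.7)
The proof rests on the scaling identity
\[
R_2(\alpha;s,N) \;=\; N^{\alpha-\beta}\, R_2\bigl(\beta;\, s\,N^{\beta-\alpha},\, N\bigr),
\]
obtained by observing that both counts record the same set of pairs at the common threshold $s/N^\alpha = s N^{\beta-\alpha}/N^\beta$. Setting $T_N := s\,N^{\beta-\alpha}$, the task reduces to proving $R_2(\beta; T_N, N)/T_N \to 2$ as $N \to \infty$, where $T_N \to \infty$ but $T_N/N^\beta = s/N^\alpha \to 0$.

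The boundary case $\alpha = 0$ is immediate from Steinerberger's result (that $\beta$-PPC with $\beta > 0$ implies uniform distribution) combined with Theorem \ref{prop1}. For $0 < \alpha < \beta \leq 1$ (so automatically $\alpha < 1$), I would use a shell decomposition. Fix small $t > 0$, set $K_N := \lceil T_N/t\rceil$, and partition the threshold range $[0, s/N^\alpha]$ into $K_N$ subintervals of length $t/N^\beta$ to get the telescoping identity
\[
R_2(\alpha; s, N) \;=\; N^{\alpha-\beta}\sum_{j=0}^{K_N - 1}\bigl[R_2(\beta;(j+1)t, N) - R_2(\beta; jt, N)\bigr].
\]
P\'olya's theorem (pointwise convergence of a monotone non-decreasing sequence to a continuous limit is uniform on compacts) yields that $R_2(\beta; u, N) \to 2u$ uniformly on each fixed $[0, A]$, so every fixed-index summand above equals $2t + o(1)$.

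The main obstacle is that $K_N \to \infty$: uniform convergence on a fixed compact interval does not suffice to control all shell contributions simultaneously. My plan to handle this is first to invoke Theorem \ref{small_scales_suffice} and reduce to $s$ inside an arbitrarily small interval $(0, s_0)$, then to combine the shell decomposition with a diagonal construction producing a sequence $A_N \to \infty$ on which uniform convergence of $R_2(\beta; \cdot, N)$ to $2\cdot$ still holds. The head of the sum over shells $j \leq A_N/t$ then contributes $2 A_N(1 + o(1))$ by uniform convergence, while the tail mass $R_2(\beta; T_N, N) - R_2(\beta; A_N, N)$ is handled by a further dyadic subdivision on which $\beta$-PPC is applied pointwise at each intermediate scale. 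The most delicate step is calibrating $s_0$, $t$, and the growth rate of $A_N$ so that the accumulated error stays $o(N^{\beta-\alpha})$, and is therefore absorbed by the prefactor $N^{\alpha-\beta}$, yielding $R_2(\alpha; s, N) \to 2s$.
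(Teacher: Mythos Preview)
Your approach has a genuine gap at the tail estimate. The scaling identity is correct, and the problem does reduce to showing $R_2(\beta; T_N, N) \sim 2T_N$ with $T_N = s N^{\beta - \alpha} \to \infty$. However, the diagonal construction can only produce a sequence $A_N \to \infty$ at some uncontrolled rate: from pointwise convergence $R_2(\beta; u, N) \to 2u$ for each fixed $u$ one can indeed extract $A_N \to \infty$ with uniform convergence on $[0, A_N]$, but there is no lower bound on how fast $A_N$ grows---in general it will be far slower than any positive power of $N$. Consequently the head, after rescaling by $N^{\alpha-\beta}$, contributes $2A_N N^{\alpha - \beta} = 2s \cdot (A_N / T_N) \to 0$, and the \emph{entire} target value $2s$ must come from the tail $R_2(\beta; T_N, N) - R_2(\beta; A_N, N)$. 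Your plan to control this tail by ``dyadic subdivision on which $\beta$\,--\,PPC is applied pointwise at each intermediate scale'' cannot work: every scale appearing in the tail lies in $[A_N, T_N]$ and hence tends to infinity with $N$, so the $\beta$\,--\,PPC hypothesis---which only speaks about fixed scales---gives no information about any of these shell increments. Invoking Theorem~\ref{small_scales_suffice} does not help either: for every $s>0$, however small, $T_N \to \infty$ and the same obstruction persists.

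The paper circumvents this by never asking for $R_2(\beta;\cdot,N)$ at growing scales. It passes to the $L^2$ characterisation of Lemma~\ref{integral_equivalence}, so that it suffices to show $\limsup_N I_\alpha(s,N) \ls s^2$ for each fixed $s$. The key step is a covering: the ball of radius $s/(2N^\alpha)$ is covered by roughly $N^{\beta-\alpha}$ translates of balls of radius $s/(2N^\beta)$, giving
\[
F_\alpha(t,s,N) \ls \frac{1}{N^{\beta-\alpha}}\sum_{|\ell|\ls M} F_\beta\Big(t+\frac{\ell s}{N^\beta},\,s,\,N\Big)
\]
with the \emph{same} fixed scale $s$ on both sides. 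Cauchy--Schwarz then yields $I_\alpha(s,N) \ls (1+o(1))\,I_\beta(s,N) \to s^2$ by the $\beta$\,--\,PPC assumption applied at the fixed scale $s$; for $\beta=1$ an extra integer parameter $K\to\infty$ absorbs the additive $+sK$ in the limit of $I_1(sK,N)$. The covering transfers the change of scale $N^\alpha \to N^\beta$ into the \emph{number of translates} rather than into the scale argument, so growing-scale information is never needed---this is precisely the idea your argument is missing.
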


In turn, Theorem \ref{beta_implies_alpha} leads naturally to the following question:  are the notions of $\beta$\,--\,PPC for the various values of $0<\beta<1$  essentially different or do they actually coincide, possibly for certain values of $\beta$? We prove that the former is the case.  
 \begin{thm}\label{beta_threshold}
    For any $0 < \beta < 1$, there exists a sequence $(y_n)_{n\in\bN}$ that has Poissonian $\alpha$-pair correlations for any $0\ls \alpha < \beta$ but does not have Poissonian $\alpha$\,--\,pair correlations for any $\beta \ls \alpha \ls 1$.
 \end{thm}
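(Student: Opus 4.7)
To prove the theorem I plan to construct, for each fixed $\beta \in (0,1)$, an explicit sequence $(y_n)$ whose ``built-in'' cluster structure lives at scale $N^{-\beta}$. Partition $\mathbb{N}$ into dyadic blocks $B_k = (2^{k-1}, 2^k]$, set $K_k := \lfloor 2^{k\beta}\rfloor$, and in block $B_k$ distribute the $|B_k|$ points roughly evenly among $K_k$ mini-clusters of diameter at most $2^{-k^2}$, with centres placed at $\sigma_k + j/K_k \pmod 1$ for $j = 0, \ldots, K_k - 1$. The shifts $\sigma_k$ are drawn independently from the uniform distribution on $[0,1]$, so that almost surely the cluster grids of distinct blocks are in generic relative position. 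Within each block the ordering visits one representative of every cluster before continuing to a second representative, so that for every $L \le |B_k|$ the first $L$ points of $B_k$ already form an approximate cluster configuration with $K_k$ clusters.

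For the analysis, write $N = 2^{k-1} + L$ and $r = s/N^{\alpha}$, and decompose the ordered pair count at distance $\le r$ into within-cluster, within-block between-cluster, and cross-block contributions. Under the generic alignment condition, the cross-block contribution between $B_i$ and $B_k$ with $i \ne k$ is asymptotic to $2r\,|B_i|\,|B_k|$, and after the cancellations between the within-block between-cluster term and the cross-block term the total pair count simplifies (up to negligible errors) to
\begin{equation*}
\sum_{j \le k} \frac{L_j^2}{K_j}\; +\; 2rN^2,
\end{equation*}
where $L_j = |B_j|$ for $j < k$ and $L_k = L$. Dividing by $N^{2-\alpha}$, the second term contributes exactly $2s$, while the first has order $N^{\alpha - \beta}$ since $|B_j|^2/K_j \asymp 2^{j(2-\beta)}$. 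For $\alpha < \beta$ this correction vanishes and $(y_n)$ has $\alpha$-PPC; for $\alpha = \beta$ it stays bounded below by a strictly positive constant, so $R_2(\beta; s, N) \not\to 2s$ for small $s > 0$ and $\beta$-PPC fails. Theorem~\ref{beta_implies_alpha} applied contrapositively then shows that $\alpha$-PPC fails for every $\alpha > \beta$.

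The main technical obstacle is justifying the cross-block asymptotic $2r\,|B_i|\,|B_k|$: for uniformly random shifts $\sigma_k$ an expectation calculation yields this value immediately, and a second-moment bound combined with a Borel--Cantelli argument produces a sample realisation of $(y_n)$ for which the asymptotic holds almost surely. A secondary point is that $R_2(\alpha; s, N)$ must converge for every $N$, not only along the subsequence $N = 2^k$; the interleaved ordering inside each block ensures that the partial sub-block structure after $L$ points still resembles a scaled-down cluster configuration, so the decomposition above goes through uniformly in $L$.
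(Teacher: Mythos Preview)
Your construction shares the core idea with the paper's proof: build in a cluster structure at scale $N^{-\beta}$ so that the within-cluster pairs contribute a term of order $N^{2-\beta}$ to the pair count, which is $o(N^{2-\alpha})$ precisely when $\alpha<\beta$ and is bounded below by a positive constant when $\alpha=\beta$. The implementations differ substantially, however. The paper starts from an arbitrary sequence $(y_n)$ already known to have PPC and repeats each of its first $M$ terms exactly $\lfloor M^{1/\beta-1}\rfloor$ times, so that the first $A_M\sim M^{1/\beta}$ elements of the new sequence are copies of $y_1,\ldots,y_M$ with equal multiplicity. The pair count then splits cleanly into ``equal-value'' pairs, giving the $N^{\alpha-\beta}$ correction, and ``distinct-value'' pairs, which yield $2s$ directly because the base sequence has PPC---with Theorem~\ref{beta_implies_alpha} invoked to pass from PPC to $(\alpha/\beta)$-PPC of $(y_n)$. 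No randomness and no cross-block estimates are needed.

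Your route instead assembles the sequence from arithmetic progressions of tiny clusters with random global shifts, and recovers the $2s$ background from the approximate equidistribution of the whole configuration. This is workable in principle, but the cross-block asymptotic $2r\,|B_i|\,|B_k|$ is genuinely delicate: it does not hold deterministically for pairs of early blocks (those with $rK_i,rK_j\lesssim 1$), and a crude bound on the resulting error can exceed $N^{2-\alpha}$ for $\alpha$ close to $\beta$, so probabilistic input is indispensable. Your second-moment/Borel--Cantelli argument then has to work for all $N$, all $s>0$, and all $\alpha<\beta$ simultaneously; monotonicity in $s$ handles one parameter, but the dependence on $\alpha$ enters through which blocks count as ``early'', and this needs to be organised carefully. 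In short, your approach should succeed, but it trades the paper's clean reduction to a known PPC sequence for a considerably longer probabilistic analysis; the paper's proof is shorter precisely because the $2s$ background is inherited from $(y_n)$ rather than built by hand.
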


Therefore  for the different values of the parameter $0\ls \beta\ls 1$ the property of \mbox{$\beta\text{\,--\,PPC}$}  can be seen as covering a spectrum ranging from PPC to uniform distribution, with  $\beta_1$\,--\,PPC being genuinely stronger than $\beta_2$\,--\,PPC whenever $\beta_1 > \beta_2.$ We note that an analogue of Theorem \ref{beta_threshold} is already known for the value $\beta=1$: the van der Corput sequence has $\alpha$\,--\,PPC for any $\alpha<1$ but does not have PPC, see e.g. \cite{sw}. The same paper \cite{sw} also contains an alternative proof of Theorem \ref{beta_implies_alpha}; in our opinion, though, this proof  contains an oversight  since the range $s$ is allowed to vary depending on $N$.  

\subsection{A metric consideration} Another difference of the weak Poissonian correlations compared to the standard PPC occurs in the metric setup. 
From the metric point of view, an increasing sequence $(a_n)_{n\in\mathbb{N}}$ of positive integers is considered fixed, and we examine the Lebesgue measure of the set of those $x\in [0,1]$ for which the sequence $(a_n x)_{n\in\mathbb{N}}$ has Poissonian pair correlations. Towards this direction, several results have been proved for specific choices of the sequence $(a_n)_{n\in\mathbb{N}}$. To name a few examples, we mention that for any exponent $k\gs 2$ the sequence $(n^k x)_{n\in\mathbb{N}}$   has Poissonian pair correlations for almost all $ x\in [0,1]$, see \cite{heathbrown,rs}. On the other hand, writing $p_n$ for the $n$-th prime number, the sequence $(p_n x)_{n\in\mathbb{N}}$ does not have Poissonian pair correlations for almost all $x\in [0,1]$, see \cite{walker}. For more such results we refer to \cite{all,rz,rz2}.  \par
A fundamental question in the theory of metric Poissonian pair correlations is whether a zero\,--\,one law holds in the setup described above. That is, given an increasing sequence $(a_n)_{n\in\mathbb{N}}\subseteq \bN$, does the set of $x\in [0,1]$ such that $(a_n x)_{n\in\mathbb{N}}$ has PPC have Lebesgue measure either $0$ or $1$? Although all results so far suggest that the answer is positive, this question still remains unanswered.\par  We now briefly examine $\beta$\,--\,pair correlations from the metric point of view. Since $\beta-$PPC is equivalent to uniform distribution,  for the $0$\,--\,PPC a zero-one law is true in a trivial sense: for any choice of the sequence $(a_n)_{n\in\mathbb{N}}$ the sequence $(a_n x)$ is uniformly distributed and hence has $0-$PPC for almost all $x$ (see \mbox{\cite[Theorem 4.1]{kuipers}}). We show that this is also the case with weak pair correlations for all ranges $0<\beta<1.$ \par In view of the multiple examples of sequences $(a_n)_{n\in\mathbb{N}}\subseteq \bN$ for which $(a_n x)_{n\in\mathbb{N}}$ fails to have PPC almost surely (see references above), the following theorem exhibits another difference between the properties of weak Poissonian correlations with the standard notion of PPC.  
\begin{thm} \label{weak01law}
Let $(a_n)_{n\in\bN}$ be an increasing sequence of positive integers and \mbox{$0< \beta <1.$} Then for almost all $x\in [0,1]$ the sequence $(a_n x)_{n\in\bN}$ has Poissonian $\beta$\,--\,pair correlations.   
\end{thm}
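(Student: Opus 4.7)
The plan is to reduce via Theorem \ref{small_scales_suffice} to convergence on a small interval $(0,s_0)$ of values of $s$, and then, for each fixed $s$ in that interval, to prove $R_2(\beta;s,N)(x)\to 2s$ for a.e.\ $x$ by the second moment method. Since Theorem \ref{small_scales_suffice} applies pointwise in $x$ and it suffices to handle a countable dense family of parameters $s$, the task reduces to showing that for each fixed $s>0$ the set $\{x:R_2(\beta;s,N)(x)\not\to 2s\}$ has Lebesgue measure zero.

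For the mean, the identity $\int_0^1\one_{\|dx\|\ls\delta}\,dx=2\delta$ for nonzero integer $d$ and $\delta\ls 1/2$ gives $\int_0^1 R_2(\beta;s,N)(x)\,dx=2s(1-1/N)$. For the variance I would Fourier-expand $\one_{\|y\|\ls\delta}=2\delta+\sum_{k\neq 0}\hat b_\delta(k)\,e^{2\pi iky}$ with $\hat b_\delta(k)=\sin(2\pi k\delta)/(\pi k)$, thereby writing
\[
R_2(\beta;s,N)(x)-2s(1-1/N)=\frac{1}{N^{2-\beta}}\sum_{k\neq 0}\hat b_\delta(k)\,\bigl(|S_{k,N}(x)|^2-N\bigr),
\]
where $S_{k,N}(x)=\sum_{n\ls N}e^{2\pi ika_nx}$. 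Integrating the square and regrouping pairs $(k,k')$ by $d=\gcd(|k|,|k'|)$ with coprime $k_1=|k|/d,\,k_1'=|k'|/d$, the Fourier series of the second Bernoulli polynomial, $\sum_{d\gs 1}\cos(2\pi d\alpha)/d^2=\pi^2 B_2(\{\alpha\})$, yields the uniform bound
\[
\Bigl|\sum_{d\gs 1}\hat b_\delta(dk_1)\hat b_\delta(dk_1')\Bigr|\ls \frac{\delta}{\max(k_1,k_1')}.
\]
A change of variables $u=k_1 t,\ v=k_1't$ then reduces the variance to
\[
\operatorname{Var}\bigl(R_2(\beta;s,N)\bigr)\ls \frac{C\delta}{N^{4-2\beta}}\,\Sigma_N,\qquad \Sigma_N:=\sum_{u,v\gs 1}r_N(u)\,r_N(v)\,\frac{\gcd(u,v)}{\max(u,v)},
\]
with $r_N(d)=\#\{m\neq n\ls N:\,a_m-a_n=d\}$.

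The crucial arithmetic estimate is $\Sigma_N=O(N^3(\log N)^{C})$ uniformly in the sequence $(a_n)$. Here the monotonicity of $(a_n)$ enters decisively via the elementary inequality $a_m-a_n\gs m-n$, which forces $\sum_{u\gs 1}r_N(u)/u\ls\sum_{k=1}^{N-1}(N-k)/k=O(N\log N)$ regardless of how sparse or dense $(a_n)$ is. Combined with a dyadic decomposition in $\max(u,v)$ and the divisor identity $\gcd(u,v)=\sum_{d\mid u,\,d\mid v}\phi(d)$, this should yield $\Sigma_N=O(N^3(\log N)^{C})$, and hence $\operatorname{Var}(R_2(\beta;s,N))=O(N^{\beta-1}(\log N)^{C})\to 0$, the decisive point being that $\beta<1$ supplies one full power of $N$ of slack. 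This uniform arithmetic bound is the step I expect to be the main obstacle.

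Given the variance bound, Chebyshev together with Borel--Cantelli along the subsequence $N_j=\lceil j^{c}\rceil$ with $c>1/(1-\beta)$ yield $R_2(\beta;s,N_j)(x)\to 2s$ for a.e.\ $x$. Interpolation between $N_j$ and $N_{j+1}$ is then achieved by the monotonicity sandwich
\[
\rho_j^{-(2-\beta)}R_2(\beta;\rho_j^{-\beta}s,N_j)(x)\ls R_2(\beta;s,N)(x)\ls \rho_j^{2-\beta}R_2(\beta;\rho_j^{\beta}s,N_{j+1})(x),\quad \rho_j=\frac{N_{j+1}}{N_j},
\]
which follows from the monotonicity of the pair-count in both the threshold $s/N^\beta$ and the cutoff $N$; since $\rho_j\to 1$, both bounds converge to $2s$. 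Combined with Theorem \ref{small_scales_suffice} applied pointwise in $x$, this delivers $\beta$-PPC almost surely and completes the proof.
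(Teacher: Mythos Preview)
Your architecture is the paper's: second-moment method via Fourier expansion, variance controlled by a gcd sum, then Chebyshev plus Borel--Cantelli along a polynomial subsequence and a monotonicity sandwich to pass to all $N$, finishing with a countable dense family of scales $s$. The initial appeal to Theorem~\ref{small_scales_suffice} is harmless but unnecessary; the paper simply treats all $s>0$ and uses monotonicity in $s$ at the end.

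The one real divergence is at the arithmetic step, and here your emphasis is somewhat misplaced. The paper does \emph{not} exploit the inequality $a_m-a_n\gs m-n$; its variance bound leads to
\[
\frac{\delta\log N}{N^{4-2\beta}}\sum_{m,n\gs 1} r_N(m)r_N(n)\,\frac{\gcd(m,n)}{\sqrt{mn}},
\]
and then it invokes the gcd-sum estimate of Aistleitner--Larcher--Lewko (already Dyer--Harman suffices), which needs only the structural input $\sum_m r_N(m)^2\ls N^3$ --- a consequence of the $a_n$ being \emph{distinct}, not of monotonicity --- to obtain the sum $\ll N^{3+o(1)}$ and hence $\mathrm{Var}\ll N^{-(1-\beta)/2}$. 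Since $\max(u,v)\gs\sqrt{uv}$, your $\Sigma_N$ is dominated termwise by this same gcd sum, so citing the known bound closes your argument immediately. Your proposed elementary route via $\sum_u r_N(u)/u=O(N\log N)$ and a divisor decomposition would, if it worked, give a slightly sharper exponent on the logarithm, but it is not needed for the theorem, and carrying it out is harder than you suggest (the bound on $\sum_u r_N(u)/u$ alone does not obviously control sums weighted by $r_N(u)r_N(v)$). In short: the monotonicity is not the decisive feature here --- distinctness is --- and the ``main obstacle'' you anticipate is dissolved by quoting the standard gcd-sum literature, exactly as the paper does.
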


\subsection{Weak correlations of higher orders} As a last part of this paper, we extend the previous discussion on weaker variants of Poissonian correlations to orders greater than $2$. We shall study the $k$\,--\,th order correlations of sequences rescaled by a factor of $N^\beta,$ which for convenience we name $(k,\beta)$\,--\,correlations. Given any integer $k \gs 2$, a parameter $0\ls \beta \ls 1$ and a closed rectangle 
\begin{equation}\label{form_of_rect}\mathcal{R} = [a_1,b_1]\times [a_2,b_2] \times \ldots \times [a_{k-1},b_{k-1}] \subseteq \mathbb{R}^{k-1},\end{equation} we define the $(k,\beta)$-correlation function of a sequence $\xn$ as
  \begin{align*}
&R_{k}(\beta;\mathcal{R},N)\! =\! \frac{1}{N^{k - (k-1)\beta}}
 \#\!\!\left\{ \parbox{7em}{ $i_1,\ldots,i_k\ls N$ \\ \text{ \, distinct }} \hspace{-5mm} : N^{\beta} ( (\!(x_{i_1}- x_{i_2})\!),\ldots, (\!(x_{i_1} - x_{i_{k-1}})\!) ) \in \mathcal{R} \right\}\!.
 \end{align*}
(Here and in what follows, we say that the indices $i_1,\ldots,i_k$ are distinct when $i_m\neq i_n$ for $m\neq n.$) As one might expect, when $0< \beta \ls 1$ we define a sequence $(x_n)_{n \in \mathbb{N}}$ to have Poissonian $(k,\beta)$\,--\,correlations if for all rectangles $ \mathcal{R}\subseteq\mathbb{R}^{k-1}$  as in \eqref{form_of_rect} 
it holds that
 \begin{equation*} \label{k_beta_def} \lim_{N \to \infty} R_{k}(\beta; \mathcal{R},N) = \lambda(\mathcal{R}), \end{equation*}
 where $\lambda$ denotes the $(k-1)$\,--\,dimensional Lebesgue measure.  Evidently, when $\beta=1$ in the preceding definition, one obtains the usual definition of Poissonian $k$\,--\,th order correlations \cite{kr}. \par For the specific value $\beta=0$, we say that a sequence has Poissonian $(k,0)$\,--\,correlations if $\lim_{N\to\infty}R_k(0;\cR,N) =\lambda(\mathcal{R})$
holds for any rectangle $\mathcal{R} \subseteq [-\frac12, \frac12]^{k-1}.$ \par We note that the previous definitions are generalizations of just one of the equivalent ways to define  Poissonian $k$\,--\,th order correlations.  We refer the reader to \mbox{\cite[Appendix A]{our_other_paper}} for a relevant discussion, which can be easily adapted to the context of weak correlations.
 \par As a first result in this section, we prove that for any order $k\gs 2$, Poissonian $(k,0)$\,--\,correlations are equivalent to the property of uniform distribution, thus generalising Theorem \ref{prop1}.
\begin{thm} \label{thm7} Let $k \gs 2$ be an integer. Then a sequence $\xn \subseteq [0,1]$ is uniformly distributed if and only if it has Poissonian $(k,0)$\,--\,correlations. 
\end{thm}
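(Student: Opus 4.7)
The plan is to prove the two directions of the equivalence separately, with the reverse implication proceeding by induction on $k$ and reducing to the $k=2$ case handled in Theorem~\ref{prop1}.

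For the forward direction, I would assume $\xn$ is uniformly distributed mod $1$, so its discrepancy $D_N$ tends to $0$. Fix a rectangle $\cR = \prod_{j=2}^{k} [a_j, b_j] \subseteq [-\tfrac12, \tfrac12]^{k-1}$. The key point is that for any $\alpha \in [0,1]$ and any $j$, the set $\{y \in [0,1] : ((\alpha - y)) \in [a_j, b_j]\}$ is a union of intervals modulo $1$ of total length $b_j - a_j$, so by the definition of discrepancy
\[
\Big| \frac{1}{N} \#\{n \ls N : ((\alpha - x_n)) \in [a_j, b_j]\} - (b_j - a_j) \Big| \ls 2 D_N,
\]
uniformly in $\alpha$. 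I would then decompose the unrestricted sum as $\frac{1}{N}\sum_{i_1} \prod_{j=2}^{k} \frac{1}{N}\sum_{i_j}$, apply the above estimate with $\alpha = x_{i_1}$ in each inner sum, and obtain convergence to $\prod_{j=2}^{k}(b_j - a_j) = \lambda(\cR)$. The distinctness constraint removes at most $O(N^{k-1})$ tuples from the count, contributing $O(1/N)$ to $R_k(0;\cR, N)$, which is negligible.

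For the reverse direction, I would induct on $k \gs 2$. The base case $k=2$ is precisely Theorem~\ref{prop1}: the $(2,0)$-correlations tested against symmetric rectangles $\cR = [-s, s]$ with $\lambda([-s,s]) = 2s$ coincide with the definition of $0$\,--\,PPC. For the inductive step, I would test the $(k,0)$-correlation hypothesis against the specific shape $\cR = \cR' \times [-\tfrac12, \tfrac12]$, where $\cR' \subseteq [-\tfrac12, \tfrac12]^{k-2}$ is an arbitrary rectangle. The last factor imposes no real constraint on the last difference $((x_{i_1} - x_{i_k}))$, so the count over distinct $(i_1, \ldots, i_k)$ factors: for each distinct $(k-1)$-tuple $(i_1, \ldots, i_{k-1})$ meeting the remaining constraints, the index $i_k$ ranges freely over the $N - (k-1)$ remaining values, giving
\[
R_k(0; \cR' \times [-\tfrac12, \tfrac12], N) = \frac{N - (k-1)}{N} \, R_{k-1}(0; \cR', N).
\]
The left side tends to $\lambda(\cR' \times [-\tfrac12, \tfrac12]) = \lambda(\cR')$ by hypothesis, and the prefactor tends to $1$, so $R_{k-1}(0; \cR', N) \to \lambda(\cR')$. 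Since $\cR'$ was arbitrary, $\xn$ has $(k-1,0)$-correlations and the induction closes.

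Neither direction presents a deep obstacle. The only mildly delicate point in the forward direction is the uniformity of the discrepancy bound in the shift parameter $\alpha$, which is immediate since translates of intervals remain intervals of the same length on the torus. In the reverse direction, the only care required is accurate bookkeeping of the distinctness condition when reducing from order $k$ to $k-1$, which is why the prefactor $(N-(k-1))/N$ appears in the identity above.
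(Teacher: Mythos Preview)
Your proof is correct in both directions. The forward direction is essentially the paper's argument: the paper also bounds the inner averages $\frac{1}{N}\sum_{m}\one_{[x_n-b,\,x_n-a]}(x_m)$ by $(b-a)+D_N$ and concludes. The only cosmetic difference is that the paper invokes Lemma~\ref{equivalent_k_correl} to reduce to proving the upper bound $\limsup R_k^{*}(0;\mathcal{C},N)\ls\lambda(\mathcal{C})$ for cubes, whereas you establish the full limit for arbitrary rectangles directly and then dispose of the distinctness constraint via the trivial $O(N^{k-1})$ bound on non-distinct tuples.

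Your reverse direction, however, is genuinely different and more elementary. The paper deduces this implication from Theorem~\ref{higher_order_weak}, whose proof for general $\beta<1$ passes through the auxiliary function $R_k^{*}$, Lemma~\ref{equivalent_k_correl}, a H\"older argument, and the Chebyshev-type inequality $N\sum x_i^{k-1}\gs(\sum x_i)(\sum x_i^{k-2})$. Your observation that for $\beta=0$ the last coordinate can be trivialised by taking the factor $[-\tfrac12,\tfrac12]$, yielding the exact identity
\[
R_k\bigl(0;\cR'\times[-\tfrac12,\tfrac12],N\bigr)=\frac{N-(k-1)}{N}\,R_{k-1}(0;\cR',N),
\]
is a clean shortcut specific to $\beta=0$: it would not extend to $\beta>0$, since then no fixed interval can absorb the rescaled coordinate $N^{\beta}(\!(x_{i_1}-x_{i_k})\!)$ for all large $N$. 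So the paper's route buys the general reduction $(k,\beta)\Rightarrow(k-1,\beta)$ for all $\beta<1$, while yours buys a self-contained proof of Theorem~\ref{thm7} that does not rely on any of the machinery in Lemma~\ref{equivalent_k_correl} or Theorem~\ref{higher_order_weak}.
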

As a corollary, Theorem \ref{thm7} implies that Poissonian $(k+1,0)$\,--\,correlations are equivalent to Poissonian $(k,0)$\,--\,correlations for any $k \gs 2$. This brings us to the main motivating point for studying weak correlations of higher orders. \par It is an interesting open problem  to determine whether the property of Poissonian correlations of some given order $k\gs 3$ is a property stronger than Poissonian correlations of lower orders. In the opposite direction, Lutsko, Sourmelidis and Technau recently \cite{lst} gave explicit examples of sequences with Poissonian pair correlations, but without Poissonian triple correlations. Their argument can be easily modified to yield the existence of sequences with Poissonian pair correlations but without Poissonian $k$\,--\,th order correlations for $k>3.$ In addition, the sequence $(\sqrt{n})_{n^2 \notin \mathbb{N}}$ is known to have Poissonian pair correlations \cite{elbaz} but does not have Poissonian correlations of $k$\,--\,th order for at least one value of $k\geqslant 3$; if the correlations of all orders followed the Poisson model, the gap distribution of this sequence would be the Poisson distribution \cite[Appendix]{kr}. However, this is not the case for $(\sqrt{n})_{n^2 \notin \mathbb{N}}$, as was proved by Elkies and McMullen in \cite{elkies}. \par 
It still remains unknown whether the other implication holds, i.e. whether Poissonian $k+1$\,--\,correlations imply Poissonian $k$\,--\,correlations. In the setup of weak  correlations, it turns out that we are able to answer such a question positively.

 \begin{thm}\label{higher_order_weak}
Let $k > 2, 0 \ls \beta < 1$. If a sequence $(x_n)_{n \in \mathbb{N}}$ has Poisonnian $(k,\beta)$\,--\,correlations,	then it also has  Poissonian $(k-1,\beta)$\,--\,correlations.  In particular, Poissonian ($k,\beta$)\,--\,correlations imply weak Poissonian pair correlations with parameter $\alpha \ls \beta$ and therefore, uniform distribution. \end{thm}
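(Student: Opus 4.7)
My plan is to perform a one-step reduction in the order of correlations by ``integrating out'' the last coordinate of the difference vector. Fix a rectangle $\mathcal{R} \subseteq \mathbb{R}^{k-2}$ and a threshold $T>0$, and set $\mathcal{R}' := \mathcal{R} \times [-T,T] \subseteq \mathbb{R}^{k-1}$. Applying the $(k,\beta)$\,--\,hypothesis to $\mathcal{R}'$ yields $R_k(\beta;\mathcal{R}',N) \to 2T\lambda(\mathcal{R})$. On the other hand, summing first over the last index $i_k$, the count defining $R_k$ decomposes as
\[
N^{k-(k-1)\beta}\,R_k(\beta;\mathcal{R}',N) \;=\; \sum_{(i_1,\ldots,i_{k-1})\in\mathcal{C}(\mathcal{R},N)} n_N(i_1;T) \;+\; O\bigl(|\mathcal{C}(\mathcal{R},N)|\bigr),
\]
where $\mathcal{C}(\mathcal{R},N)$ denotes the set of distinct $(k-1)$-tuples counted by $R_{k-1}(\beta;\mathcal{R},N)$, $n_N(i_1;T):=\#\{i \ls N:\|x_{i_1}-x_i\| \ls T/N^\beta\}$, and the error term accounts for forbidding $i_k\in\{i_1,\ldots,i_{k-1}\}$.

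The central task is to show that the extension counts $n_N(i_1;T)$ concentrate around the Poissonian value $2T\,N^{1-\beta}$ on the support $\mathcal{C}(\mathcal{R},N)$ in a sense sufficient to extract the asymptotic of $|\mathcal{C}(\mathcal{R},N)|$. To this end, I would apply the hypothesis to the auxiliary rectangle $[-T,T]^{k-1} \subseteq \mathbb{R}^{k-1}$, which (modulo negligible distinctness corrections) gives the $(k-1)$-th moment identity
\[
\frac{1}{N^{k-(k-1)\beta}}\sum_{i_1 \ls N} n_N(i_1;T)^{k-1} \longrightarrow (2T)^{k-1}.
\]
The power mean inequality applied to this already yields the upper bound $R_2(\beta;T,N) \ls 2T + o(1)$. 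The complementary lower bound and, more importantly, the concentration needed to evaluate the $\mathcal{C}(\mathcal{R},N)$-weighted average, is obtained by applying the hypothesis to the mixed rectangles $\prod_j [a_j,b_j] \times [-T,T]$: these produce mixed $(k-1)$-th moment identities that saturate H\"older's inequality with exponents $(k-1,(k-1)/(k-2))$ in the limit, forcing near-proportionality of $n_N(\cdot;T)$ to the product weights $\prod_j m_N(\cdot;[a_j,b_j])$ (with $m_N(i_1;J):=\#\{i:N^\beta(x_{i_1}-x_i)\in J\}$) that define $\mathcal{C}(\mathcal{R},N)$.

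Feeding this concentration back into the decomposition yields $|\mathcal{C}(\mathcal{R},N)| \sim \lambda(\mathcal{R})\,N^{k-1-(k-2)\beta}$, i.e., $R_{k-1}(\beta;\mathcal{R},N) \to \lambda(\mathcal{R})$. The main obstacle I anticipate is twofold: first, extracting a quantitative pointwise statement from the near-equality in H\"older, which is a stability-of-extremisers question and typically only gives $L^{k-1}$-proportionality directly; second, controlling the distinctness error uniformly for all $\beta\in[0,1)$ and all $k$, since the trivial bound $|\mathcal{C}(\mathcal{R},N)| \ls N^{k-1}$ fails to beat $N^{k-(k-1)\beta}$ when $(k-1)\beta \gs 1$. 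Both issues can be handled by a bootstrap based on the $L^{k-1}$ estimate secured above, which provides the a priori tightness needed to absorb the error. Once the base implication $(k,\beta)\Rightarrow(k-1,\beta)$ is established, the ``in particular'' assertion follows by iterating the reduction down to $k=2$ to obtain $\beta$\,--\,PPC, and then invoking Theorem \ref{beta_implies_alpha} together with Theorem \ref{prop1} to pass to $\alpha$\,--\,PPC for $\alpha\ls\beta$ and, finally, to uniform distribution.
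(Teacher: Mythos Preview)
Your overall strategy --- integrate out one coordinate and show that the resulting extension count concentrates --- is natural, and your power-mean step toward $\beta$\,--\,PPC is essentially the paper's first step. But the heart of your argument, the passage from ``saturating H\"older in the limit'' to ``near-proportionality of $n_N(\cdot;T)$ on the support $\mathcal{C}(\mathcal{R},N)$'', is not a proof as written. H\"older stability in $L^{k-1}$ does not give you control of the \emph{weighted} average $\sum_{\mathcal{C}(\mathcal{R},N)} n_N(i_1;T)$ against an arbitrary rectangle weight $\prod_j m_N(i_1;[a_j,b_j])$ unless you already know something about the size and structure of $\mathcal{C}(\mathcal{R},N)$ --- which is precisely the quantity you are trying to determine. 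The bootstrap you invoke to close this circularity (and to kill the distinctness error when $(k-1)\beta\gs 1$) is not specified; without it the argument is incomplete.

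The paper sidesteps both obstacles with two devices. First, it passes to the starred counts $R_k^{*}$ of \eqref{rkstardef}, which allow repeated indices; Lemma~\ref{equivalent_k_correl} shows that for $\beta<1$ it is equivalent to verify $\limsup_{N} R_k^{*}(\beta;\mathcal{C},N)\ls\lambda(\mathcal{C})$ for \emph{cubes} $\mathcal{C}=[a,b]^{k-1}$ only. This removes the distinctness bookkeeping and the need to treat general rectangles directly. Second, in place of a stability-of-H\"older argument the paper uses Chebyshev's sum inequality $N\sum y_i^{k-1}\gs(\sum y_i)(\sum y_i^{k-2})$, which immediately yields
\[
R_k^{*}(\beta;[a,b]^{k-1},N)\ \gs\ R_2^{*}(\beta;[a,b],N)\cdot R_{k-1}^{*}(\beta;[a,b]^{k-2},N).
\]
Since the left side tends to $(b-a)^{k-1}$ and (by your own power-mean step, made rigorous via Lemma~\ref{equivalent_k_correl}) $R_2^{*}\to b-a$, one reads off $\limsup R_{k-1}^{*}\ls (b-a)^{k-2}$, and Lemma~\ref{equivalent_k_correl} upgrades this to full Poissonian $(k-1,\beta)$\,--\,correlations. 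This replaces your concentration/stability step by a one-line algebraic inequality; I would recommend rewriting your argument around $R_k^{*}$ and Chebyshev rather than attempting to make the H\"older-stability route quantitative.
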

We end the introductory part of the paper with some directions for further research. Concerning the relation between the properties of Poissonian correlations of different orders, in view of Theorem \ref{higher_order_weak} one might expect that any sequence with Poissonian $(k+1)$\,--\,order correlations also has Poissonian correlations of order $k.$ However, we are hesitant to conjecture that this is indeed the case. Attempting to adapt the proof of Theorem \ref{higher_order_weak} to the case $\beta=1,$ one can notice that the impasse arises from inequality \eqref{compare_R_k*} appearing therein. More precisely, when $\beta=1$ the terms in \eqref{compare_R_k*} that involve correlation functions of orders $2\ls m \ls k-1$ are not negligible when $N\to \infty,$ and this in turn does not allow for a characterisation of Poissonian correlations of order $k$ in terms of the functions $R_k^*$ introduced later in the paper. \par  Further, we are confident that $(k,\beta)$\,--\,Poissonian correlations of any order $k\gs 3$ is a property that can be detected at small scales, in other words an analogue of Theorem \ref{small_scales_suffice} holds for weak correlations of higher
 orders. We believe that in order to prove this fact it would suffice to establish a condition equivalent to Poissonian $(k,\beta)$\,--\,correlations in terms of the function $F_\beta$ defined later in \eqref{Fdef}. 
\subsection{Notation}\label{notation} 
Given two functions $f,g\!:\!(0,\infty)\to \mathbb{R},$ we shall write \mbox{$f(t) = \mathcal{O}(g(t))$},  $t\to\infty$ and $f(t)= o(g(t)),\, t\to\infty$   when   
\[\limsup_{t\to\infty} \frac{|f(t)|}{|g(t)|} < \infty  \quad \text{ or }  \quad \lim_{t\to\infty} \frac{f(t)}{g(t)} =0   \] respectively.  For a real number $x\in \bR,$ we write $\{x\}$ for the fractional part of $x$, $\|x\|=\min\{|x-k|: k\in\mathbb{N}\}$ for the distance of $x$ from its nearest integer, and $$(\!(x)\!)=\begin{cases} \{x\}, &\text{ if } 0\ls x \ls \tfrac{1}{2} \\ 1-\{x\}, &\text{ if } \tfrac{1}{2}< \{x\} < 1  \end{cases} $$ for the signed distance of $x$ from the origin modulo $1$. Further, we use the symbol $\{\,\cdot\,\}^+$ for the function 
\[ \{x\}^+ = \begin{cases}x, &\text{ if } x\gs 0 \\ 0, &\text{ if } x<0 . \end{cases} \] We use the standard notation $e(x)=e^{2
\pi ix}$ and also write $B(x_0,r)= \{x\in [0,1] : \|x- x_0\| \ls r\}$ for the closed interval with center $x_0$ and length $2r$ modulo $1$. Given a positive integer $m\gs 1$ we write $[m]=\{1,\ldots, m\}$ for the set of positive integers which are less than or equal to $m.$

\section{Preliminary Results}
   
For any value of $0\ls \beta\ls 1$ we define the function
\begin{equation} \label{Fdef}
F_{\beta}(t,s,N)  = \frac{1}{N^{1-\beta} }\#\Big\{1 \ls n \ls N: \norm{x_n - t} \ls \frac{s}{2N^{\beta}}\Big\}, \quad 0\ls t \ls 1.   
\end{equation}
Heuristically, $F_\beta$ can be thought of as counting the number of points among the first $N$ terms of $\xn$ lying within an interval of length $s/N^{\beta}$ and random center $0\ls t\ls 1,$ uniformly distributed in $[0,1]$. 
We also define the integral 
\begin{equation} \label{Idef}
    I_\beta(s,N) = \int_{0}^1 F_\beta(t,s,N)^2 \,\mathrm{d}t.
\end{equation}
 The importance of $I_\beta(s,N)$ in the proofs of our main results is clear from the following lemma, which is an analogue of \cite[Proposition 9]{our_other_paper} in the context of weak correlations.

\begin{lem}\label{integral_equivalence}
Let $(x_n)_{n \in \mathbb{N}} \subseteq [0,1]$ be a sequence, $0\ls \beta \ls 1$ and $s_0>0$ be a constant. The following are equivalent. \newline   
{\it (i)} The $\beta$\,--\,pair correlation function satisfies 
\[ \lim_{N\to \infty} R_2(\beta;s,N) = 2s \qquad \text{ for all } s<s_0. \]
{\it (ii)} We have 
\begin{equation*} \label{intermediate}
 \lim_{N \to \infty} \frac{1}{s}\int_{0}^s \!R_2(\beta;\sigma,N) \,\mathrm{d}\sigma  =  s \qquad \text{ for all } s < s_0.
\end{equation*}  
{\it (iii)} The integral $I_\beta(s,N)$ defined in \eqref{Idef} satisfies
	\[\lim_{N \to \infty} I_{\beta}(s,N) = \begin{cases} s^2,    &\text{ if } \beta < 1\\
	s^2 +s,  &\text{ if } \beta = 1\end{cases}
	\qquad \text{ for all } s<s_0.\]
\end{lem}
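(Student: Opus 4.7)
The plan is to establish one master identity relating $I_\beta(s,N)$ to $\int_0^s R_2(\beta;\tau,N)\,d\tau$, and then to derive the three equivalences from it, with only the passage from the integrated statement back to the pointwise statement requiring any real care.

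First I would compute $I_\beta(s,N)$ explicitly. Writing $F_\beta(t,s,N) = N^{\beta-1}\sum_{n\ls N}\mathbf{1}_{B(x_n,\,s/(2N^\beta))}(t)$, expanding the square and integrating in $t$ reduces the problem to evaluating the Lebesgue measure of $B(x_m,s/(2N^\beta)) \cap B(x_n,s/(2N^\beta))$ on the torus; provided $s/N^\beta \le \tfrac{1}{2}$ this equals $\{s/N^\beta-\|x_m-x_n\|\}^+$. The diagonal terms $m=n$ contribute $s/N^{1-\beta}$. For the off-diagonal contribution I use the layer-cake identity $\{u-v\}^+ = \int_0^u \mathbf{1}[v\le w]\,dw$ with $u = s/N^\beta$, substitute $w = \tau/N^\beta$, swap sum and integral, and recognise the result as $\int_0^s R_2(\beta;\tau,N)\,d\tau$. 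This yields the master identity
\begin{equation*}
I_\beta(s,N) \;=\; \frac{s}{N^{1-\beta}} \,+\, \int_0^s R_2(\beta;\tau,N)\,d\tau.
\end{equation*}

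With this identity in hand, (ii)$\Leftrightarrow$(iii) is immediate: the prefactor $s/N^{1-\beta}$ vanishes for $\beta<1$ and equals $s$ for $\beta=1$, precisely matching the case split in (iii); and $\int_0^s R_2(\beta;\tau,N)\,d\tau\to s^2$ is just $s$ times the statement in (ii). The implication (i)$\Rightarrow$(ii) follows by dominated convergence, using monotonicity of $R_2(\beta;\,\cdot\,,N)$ in its scale argument to produce a uniform-in-$N$ majorant on $[0,s]$ for any $s < s_0$: pick $s<s'<s_0$ and use $R_2(\beta;\sigma,N) \le R_2(\beta;s',N)$, which is bounded in $N$ by convergence to $2s'$.

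The most delicate step is (ii)$\Rightarrow$(i), which I would handle via a sandwich argument exploiting monotonicity once more. For $s<s_0$ and $\varepsilon>0$ small enough that $s+\varepsilon<s_0$,
\begin{equation*}
\int_{s-\varepsilon}^s R_2(\beta;\tau,N)\,d\tau \;\le\; \varepsilon\,R_2(\beta;s,N) \;\le\; \int_s^{s+\varepsilon} R_2(\beta;\tau,N)\,d\tau,
\end{equation*}
and by (ii) both outer integrals tend to $2s\varepsilon \pm \varepsilon^2$ as $N\to\infty$. Dividing by $\varepsilon$ and letting $\varepsilon\to 0$ pins both $\liminf_N R_2(\beta;s,N)$ and $\limsup_N R_2(\beta;s,N)$ to $2s$.

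The main obstacle is essentially bookkeeping in the derivation of the master identity: the intersection formula $\{s/N^\beta-\|x_m-x_n\|\}^+$ is valid on the torus only when $s/N^\beta \le \tfrac{1}{2}$. For $\beta=0$ this is covered by the standing restriction $s\le\tfrac{1}{2}$ built into the definition of $0$\,--\,PPC, and for $\beta>0$ it holds for all sufficiently large $N$ depending on $s$, which is harmless since every statement in the lemma is asymptotic in $N$.
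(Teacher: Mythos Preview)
Your proposal is correct and follows essentially the same route as the paper: both derive the identity $I_\beta(s,N)=\int_0^s R_2(\beta;\sigma,N)\,d\sigma + s/N^{1-\beta}$ by expanding $F_\beta^2$ and computing the measure of the intersections, and both handle (ii)$\Rightarrow$(i) by exactly your monotonicity sandwich. The only cosmetic difference is that for (i)$\Rightarrow$(ii) the paper uses a Riemann-sum partition of $[0,s]$ rather than dominated convergence, but both arguments rest on the same monotonicity in the scale variable.
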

\begin{proof}
We first prove the equivalence of (i) with (ii). 
Assuming (i) is true, let $s<s_0.$ Then by the monotonicity of $R_2(\beta;\sigma,N)$ as a function of $\sigma$, for any integer $M\gs 1$ and for any $0\ls j \ls M-1$ we get 
\[ \frac{s}{M}R_2\Big(\beta; \frac{js}{M}, N\Big) \ls \int_{[\frac{js}{M},\frac{(j+1)s}{M} ]}R_2\big(\beta;\sigma, N\big) \,\mathrm{d}\sigma\ls \frac{s}{M}R_2\Big( \beta; \frac{(j+1)s}{M}, N\Big).  \]
Summing over all $0\ls j \ls M-1$ we get
\[  \frac{s}{M}\sum_{0\ls j \ls M-1}\! R_2\Big(\beta; \frac{js}{M}, N\Big) \ls \int_{0}^s R_2(\beta;\sigma, N ) \,\mathrm{d}\sigma \ls \frac{s}{M}\sum_{1\ls j \ls M} R_2\Big( \beta; \frac{js}{M}, N\Big). \]
Letting $N\to \infty$ and using the hypothesis in (i) we deduce that (ii) holds. \par Conversely, when (ii) is true, using again the monotonicity of $R(\beta;\sigma,N)$ in $\sigma$ gives that 
\[\frac{1}{\varepsilon}\int_{s-\ve}^s R_2(\beta;\sigma,N)\,\mathrm{d}\sigma \ls R_2(\beta; s,N) \ls \frac{1}{\varepsilon}\int_{s}^{s+\varepsilon} R_2(\beta;\sigma,N)\,\mathrm{d}\sigma \]
for $s<s_0$ fixed and for all $\varepsilon>0$ sufficiently small. Letting first $N\to \infty$ and then $\varepsilon\to 0$ we conclude that $\lim\limits_{N\to\infty}R_2(\beta;s,N)=2s.$ \newline
We now proceed to prove that (ii) is equivalent to (iii). Observe that
\begin{align*}
F_\beta(t,s,N)^2 &= 
	\frac{1}{N^{2 - 2\beta}}\sum_{m,n \ls N}
	\mathds{1}_{B\left(x_m,\frac{s}{2N^{\beta}}\right)\cap B\left(x_n,\frac{s}{2N^{\beta}}\right)}(t) .
	\end{align*}
	Denoting by $\lambda$ the 1\,--\,dimensional Lebesgue measure, we can write
	\begin{align*}
	I_\beta(s,N) &= \frac{1}{N^{2 - 2\beta}}\sum_{m,n \ls N} 
	\lambda\Big(B\Big(x_m,\frac{s}{2N^{\beta}}\Big)\cap B\Big(x_n,\frac{s}{2N^{\beta}}\Big)\Big)  \\&=
	\frac{1}{N^{2 - 2\beta}}\sum_{\substack{m,n \ls N\\m \neq n}}
 	\lambda\Big(B\Big(x_m,\frac{s}{2N^{\beta}}\Big)\cap B\Big(x_n,\frac{s}{2N^{\beta}}\Big)\Big) + \frac{1}{N^{2 - 2\beta}}\cdot \frac{Ns}{N^{\beta}}     \\
	&=  \frac{1}{N^{2 - 2\beta}}\sum_{\substack{m,n \ls N \\m \neq n}}
	\left\{ \frac{s}{N^{\beta}} - \norm{x_n-x_m}\right\}^+  + \frac{s}{N^{1-\beta}}  
	\\&=  \frac{s}{N^{2-\beta}}\sum_{\substack{m,n \ls N\\m \neq n}} 
	\left\{1 - \frac{\norm{x_n-x_m}}{s/N^{\beta}}\right\}^+  +  \frac{s}{N^{1-\beta}}  \,   \cdot
	\end{align*}
On the other hand, 
	\begin{align*}
	\frac{1}{s}\int_{0}^s R_2(\beta;\sigma,N) \,\mathrm{d}\sigma &=
	\frac{1}{s}\int_{0}^s \frac{1}{N^{2 - \beta}}\sum_{\substack{m,n \ls N\\m \neq n}}  \mathds{1}_{B(0,\frac{\sigma}{N^{\beta}})}\left( x_m-x_n\right) \,\mathrm{d}\sigma \\
	&= \frac{1}{sN^{2-\beta}}\sum_{\substack{m,n\ls N\\m \neq n}} \int_{0}^s \mathds{1}_{[\, \norm{x_m-x_n} N^{\beta},\infty)}(\sigma) \,\mathrm{d}\sigma\\&=
	\frac{1}{sN^{2-\beta}}\sum_{\substack{m,n \ls N\\m \neq n}}  
	\left\{s - \frac{\norm{x_n-x_m}}{1/N^{\beta}}\right\}^+\\&=
	\frac{1}{N^{2-\beta}}\sum_{\substack{m,n \ls N\\m \neq n}} 
	\left\{1 - \frac{\norm{x_n-x_m}}{s/N^{\beta}}\right\}^+, 
	\end{align*}
	and this finishes the proof.
\end{proof}

Finally, we mention the following lemma on the size of the integral $I_\beta(s,N)$ defined in \eqref{Idef} that will be used in the proofs of many of the main results.

\begin{lem}\label{integral_lower_bound} Let $0\ls \beta \ls 1$. For any $s>0$ and $N\gs 1$, we have $ I_\beta(s,N)   \gs   s^2.$\end{lem}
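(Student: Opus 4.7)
The plan is to deduce the inequality from the Cauchy--Schwarz (equivalently, Jensen's) inequality applied on the probability space $([0,1], \lambda)$, by observing that the first moment $\int_0^1 F_\beta(t,s,N)\,\mathrm{d}t$ can be computed exactly and equals $s$. Once this is done, the bound
\[
I_\beta(s,N) \;=\; \int_0^1 F_\beta(t,s,N)^2\,\mathrm{d}t \;\gs\; \left(\int_0^1 F_\beta(t,s,N)\,\mathrm{d}t\right)^2
\]
immediately yields $I_\beta(s,N) \gs s^2$.

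To compute the first moment, I would expand the definition of $F_\beta$ in \eqref{Fdef} and exchange sum and integral by Fubini:
\[
\int_0^1 F_\beta(t,s,N)\,\mathrm{d}t \;=\; \frac{1}{N^{1-\beta}}\sum_{n=1}^N \lambda\Bigl(\bigl\{t\in[0,1]:\|x_n-t\|\ls s/(2N^\beta)\bigr\}\Bigr).
\]
For each $n$, the set appearing inside the measure is the circular ball $B(x_n,s/(2N^\beta))$, whose Lebesgue measure on $\mathbb{R}/\mathbb{Z}$ equals $\min(s/N^\beta,1)$. In the regime $s \ls N^\beta$ this measure is exactly $s/N^\beta$, so summing over the $N$ indices and dividing by $N^{1-\beta}$ gives precisely $s$, and Jensen's inequality closes the proof.

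The only subtlety is the degenerate regime $s > N^\beta$, where the balls $B(x_n,s/(2N^\beta))$ of radius larger than $1/2$ cover all of $[0,1]$, forcing $F_\beta(t,s,N) \equiv N^\beta$ and therefore $I_\beta(s,N) = N^{2\beta}$. This is harmless for the applications of the lemma (which use the large-$N$ limit with $s$ fixed, so one eventually has $s \ls N^\beta$), and the argument otherwise requires no further ideas beyond the clean combination of Fubini with Jensen's inequality.
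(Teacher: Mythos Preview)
Your approach is identical to the paper's: apply Cauchy--Schwarz to $F_\beta$ and use that $\int_0^1 F_\beta(t,s,N)\,\mathrm{d}t = s$. Your added remark about the degenerate regime $s > N^\beta$ is a valid caveat---there the lemma as literally stated actually fails (one gets $I_\beta(s,N)=N^{2\beta}<s^2$), a point the paper's one-line proof glosses over---but, as you note, this is irrelevant for every application in the paper.
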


\begin{proof}
By the Cauchy\,--\,Schwarz inequality,  
\[ I_\beta(s,N) = \int_{0}^1 F_\beta(t,s,N)^2 \,\mathrm{d}t \gs \left(\int_{0}^1 \!\!F_\beta(t,s,N)\,\mathrm{d}t \right)^2 = s^2. \]
\end{proof}
 
\section{Proof of Theorem \ref{prop1}}

Assume the sequence $(x_n)_{n \in \mathbb{N}}$ is uniformly distributed mod $1$. 
 For any $0 < s \ls \frac{1}{2} $ we have
	\begin{align*}
\left\lvert R_2(0;s,N) - 2s \right\rvert &=  \Big\lvert\frac{1}{N^2}\sum_{\substack{k,\ell \ls N\\k \neq \ell }} \mathds{1}_{[-s,s]}\left( x_k-x_\ell \right) -2s \Big\rvert  \\
&\ls \frac{1}{N}\sum_{k = 1}^N \Big\lvert\frac{1}{N}\sum_{\substack{\ell \ls N \\ \ell \neq k}}\mathds{1}_{[-s,s]}\left(x_k-x_\ell \right) -2s\Big\rvert \\
& = \frac{1}{N}\sum_{k = 1}^N \Big\lvert\frac{1}{N}\sum_{\ell\ls N}\mathds{1}_{B(x_k, s)}\left( x_\ell \right) -2s - \frac{1}{N}\Big\rvert \\
& \ls D_N + \frac{1}{N},
	\end{align*}
	where
\[ D_N= D_N(\{x_1,\ldots,x_N\}) := \sup_{0 \ls a < b \ls 1} 
\Big\lvert\frac{1}{N}\#\left\{ i \ls N: x_i \in [a,b]\right\} - (b-a)\Big\rvert\] 
denotes the discrepancy of $\{x_1,\ldots,x_N\}$.
	Since $\xn$ is uniformly distributed,  $\lim\limits_{N \to \infty}D_N  = 0$ and hence it follows that $\xn$ has $0$\,--\,PPC.
\par Conversely, assume $(x_n)_{n \in \mathbb{N}}$ has $0$\,--\,PPC but is not uniformly distributed.
Then there exists an interval $[0,a] \subseteq [0,1]$ and a sequence $(N_k)_{k \in \mathbb{N}}$ such that
\begin{equation}\label{subsequence_0_PPC}\lim_{k \to \infty}  \frac{1}{N_k}\#\{n \ls N_k: x_n \in [0,a]\} = b \neq a.
\end{equation}
We now establish a relation between the correlation function $R_2(0;s,N_k)$ and the function $F_0(t,s,N_k)$ defined in \eqref{Fdef}. A counting argument gives 
\begin{align} 
   \frac{1}{N_k^2}& \#\Big\{1  \ls n \neq m \ls N_k:
\lVert x_n-x_m \rVert \ls s\Big\} \nonumber \\ 
& \quad \gs \frac{1}{N_k^2}\#\Big\{n\ls N_k: \|x_n-t\| \ls \frac{s}{2} \Big\}^2  - \frac{1}{N_k^2}\#\Big\{n\ls N_k: \|x_n-t\| \ls \frac{s}{2}\Big\}\label{counting} \\
 &\quad  \gs \Big(\frac{1}{N_k} \#\Big\{n\ls N_k: \|x_n-t\| \ls \frac{s}{2}\Big\}\Big)^2  - \frac{1}{N_k},\quad \text{ for any } 0\ls t \ls 1. \nonumber
\end{align}
By the assumption that $\xn$ has $0$\,--\,PPC, letting $k\to\infty$ in \eqref{counting} we get
\begin{align}
   2 s  &= \limsup_{k \to \infty} \frac{1}{N_k^2} \#\Big\{1 \ls n \neq m \ls N_k:
\lVert x_n-x_m \rVert \ls s \Big\} \nonumber
\\&\gs  \limsup_{k \to \infty} \Big(\frac{1}{N_k} \#\Big\{n\ls N_k: \|x_n-t\| \ls \frac{s}{2}\Big\}\Big)^2   \nonumber
\end{align}
for all $0\ls t \ls 1$ and all $0<s\ls \frac12$.  Thus if we fix some $\varepsilon>0,$ there exist some $s > 0$ and $K\in \mathbb{N}$ such that for all $k \gs K,$
\[\frac{1}{N_k}\#\Big\{n\ls N_k: \|x_n\| \ls \frac{s}{2}\Big\}\, < \, \frac{\varepsilon}{3} \]
and
\[\frac{1}{N_k}\#\Big\{n\ls N_k: \|x_n-a\| \ls \frac{s}{2}\Big\}\, < \, \frac{\varepsilon}{3} \cdot \]
In view of \eqref{subsequence_0_PPC}, we can additionally assume that for all $k\gs K$ we have
\[\frac{1}{N_k}\#\big\{n \ls N_k: x_n \in [0,a]\big\} > b -\frac{\varepsilon}{3} \cdot \]
Observe that the function $F_0(t,s,N)$ defined in \eqref{Fdef}
satisfies
\begin{eqnarray*}
\int_0^a F_0(t,s,N_k)\,\mathrm{d}t &=& \frac{1}{N_k}\sum_{n\ls N_k}\int_0^1 \one_{B\left(x_n,\frac{s}{2}\right)}(t)\one_{[0,a]}(t)\,\mathrm{d}t \\
&=& \frac{1}{N_k}\sum_{n\ls N_k}\lambda\Big(B(x_n, \tfrac{s}{2}) \cap [0,a] \Big) \\
&\gs & \frac{1}{N_k}\hspace{-2mm}\sum_{\substack{n\ls N_k\\ \frac{s}{2}\ls x_n \ls a-\frac{s}{2}}}\hspace{-3mm} \lambda\Big(B\big(x_n, \tfrac{s}{2}\big) \Big) \\
&=& \frac{s}{N_k} \#\Big\{n\ls N_k: \frac{s}{2}\ls x_n \ls a- \frac{s}{2} \Big\}.
\end{eqnarray*}
Thus for all $k \gs K$, 
\begin{align*}\frac{1}{s}\int_0^a F_0(t,s,N_k)\,\mathrm{d}t &\gs \frac{1}{N_k} \#\Big\{n\ls N_k: \frac{s}{2}\ls x_n \ls a- \frac{s}{2} \Big\} \\&\gs 
\frac{1}{N_k}\#\{n \ls N_k: x_n \in [0,a]\} \\& \hspace{4mm} - \frac{1}{N_k}\#\Big\{n\ls N_k: \|x_n\| \ls \frac{s}{2}\Big\}  - \frac{1}{N_k}\#\Big\{n\ls N_k: \|x_n-a\| \ls \frac{s}{2}\Big\} \\&\gs b- \ve  \end{align*}
and similarly,  we can show that 
\[\frac{1}{s}\int_a^1 F_0(t,s,N_k)\,\mathrm{d}t \gs 1 - b - \ve .  \]
By applying the Cauchy-Schwarz inequality, we obtain
\begin{eqnarray*}
 \int_0^1 F_0(t,s,N_k)^2 \,\mathrm{d}t &  = &  \int_0^a F_0(t,s,N_k)^2\,\mathrm{d}t   +    \int_a^1 F_0(t,s,N_k)^2 \,\mathrm{d}t  \\
	&\gs&  \frac{1}{a}\left( \int_0^a F_0(t,s,N_k)\,\mathrm{d}t\right)^2 + \frac{1}{1-a}\left( \int_a^1 F_0(t,s,N_k)\,\mathrm{d}t\right)^2 \\
    &\gs & \frac{s^2}{a}(b-\varepsilon)^2 + \frac{s^2}{1-a}(1-b-\varepsilon)^2.
\end{eqnarray*}
Letting $k\to \infty,$ the   left hand side converges to $s^2$ by the assumption of $0$\,--\,PPC (this follows by Lemma \ref{integral_equivalence}), therefore
\[1> \frac{(b-\varepsilon)^2}{a} + \frac{(1-b-\varepsilon)^2}{1-a} \cdot\]
This is a contradiction for $\varepsilon$ sufficiently small because $b\neq a.$
\begin{xrem} The arguments of the second part of the previous proof, i.e. the proof of the fact that $0$\,--\,PPC implies uniform distribution, can be used to deduce that $\beta$\,--\,PPC implies uniform distribution for any $\beta<1.$  When $\beta=1$, the same method can actually yield something more: any sequence for which \[\lim_{N\to\infty}R_2(s,N) = 2s \qquad \text{ for all } s\gs s_0 \] is uniformly distributed. Indeed, under this hypothesis a modification of the proof of Lemma \ref{integral_equivalence} gives that for all $s\gs s_0,$ 
\[\limsup_{N \to \infty} \int_0^1 F_1(t,s,N)^2\, \mathrm{d}t \ls s^2 + s + s_0^2.\]
The implicit constant here does not depend on the value of $s$. Assuming such a sequence is not uniformly distributed, we obtain \eqref{subsequence_0_PPC} and thus for all $k$ sufficiently large  we get 
\[\int_0^a F_1(t,s,N_k)\,\mathrm{d}t \gs (b-\varepsilon)s\quad  \text{ and }\quad   \int_a^1 F_1(t,s,N_k)\,\mathrm{d}t \gs (1-b-\varepsilon)s. \]
With an application of the Cauchy--Schwarz inequality this gives  
\begin{align}\label{cs_C2}
    \int_0^1 F_1(t,s,N_k)^2 \,\mathrm{d}t  \gs \frac{s^2}{a}(b-\varepsilon)^2 + \frac{s^2}{1-a}(1-b-\varepsilon)^2.
\end{align}
Letting $k\to \infty$ we obtain    
\[    \frac{s^2}{a}(b-\varepsilon)^2 + \frac{s^2}{1-a}(1-b-\varepsilon)^2 \ls s^2 + s + s_0^2 \quad \text{for all } s\gs s_0\]
 and if we choose $s>0$ sufficiently large, we arrive at a contradiction. We note that this proof can be seen as a ``continuous analogue'' of the proof of Aistleitner, Lachmann and Pausinger \cite{alp} that avoids the need to use the positivity of the Fej\'er kernel.  
\end{xrem}

\section{Proof of Theorem \ref{small_scales_suffice}}

 In this section we shall prove that whenever $0<\beta<1$ and 
 \begin{equation}\label{small_scale}
\lim_{N \to \infty}R_2(\beta;s,N) = 2s \qquad \text{ for all } s < s_0.\end{equation}
 holds for some value of $s_0>0,$ the sequence $\xn$ has $\beta$\,--\,PPC. In view of Lemmas \ref{integral_equivalence} and \ref{integral_lower_bound}, it suffices to show that for all $s> 0$ we have
\begin{equation}\label{larger_scale}\limsup_{N \to \infty} I_{\beta}(s,N) \ls s^2.\end{equation} 
To do so, note that for any fixed $s>0$, there exists an $N_0= N_0(s,s_0)$ such that for any even integer $K \gs N_0$ we have
$s/K < s_0$.
Note that when $N$ is sufficiently large, for any $t\in [0,1]$ we have
\[ B\Big(t, \frac{s}{2N^{\beta}}\Big) \subseteq \bigcup_{|\ell| \ls \frac{K}{2} }\hspace{-1mm} B\Big(t + \frac{\ell s }{KN^\beta}, \frac{s}{2KN^\beta}\Big)  \]
and therefore,
\begin{equation*} 
F_\beta(t,s,N) \ls \sum_{|\ell|\ls  \frac{K}{2}} F_\beta\Big(t + \frac{ \ell s}{KN^{\beta}},\frac{s}{K},N\Big).\end{equation*}
The Cauchy\,--\,Schwarz inequality gives 
\begin{align*}
\int_{0}^{1} F_{\beta}(t,s,N)^2 \,\mathrm{d}t 
&\ls \int_{0}^{1} \Big(\sum_{ |\ell|\ls   \frac{K}{2} } F_\beta\Big(t + \frac{ \ell s }{KN^{\beta}},\frac{s}{K},N\Big)\Big)^2 \,\mathrm{d}t 
\\& \ls \int_{0}^{1} (K +1) \sum_{|\ell|\ls  \frac{K}{2} }F_\beta\Big(t + \frac{ \ell s}{KN^{\beta}},\frac{s}{K},N\Big)^2 \, \mathrm{d}t
\\&= (K +1)^2   \int_{0}^{1} F_{\beta}\Big(t,\frac{s}{K},N\Big)^2 \, \mathrm{d}t.
\end{align*}
By the assumption of \eqref{small_scale} and using Lemma \ref{integral_equivalence},
\[\lim_{N \to \infty}\int_{0}^{1} F_\beta\Big(t,\frac{s}{K},N\Big)^2 \,\mathrm{d}t = \frac{s^2}{K^2},\]
so we obtain 
\[
\limsup_{N \to \infty} \int_{0}^{1} F_{\beta}(t,s,N)^2 \,\mathrm{d}t \ls \frac{(K+1)^2}{K^2}s^2.
\]
Letting $K \to \infty$, the result follows.
 
\section{Proof of Theorem \ref{stronger_not_for_one}}
We prove that given any $S>0,$ we can find a sequence $\xn\subseteq [0,1]$ such that on the one hand $R_2(s,N)\to 2s$ for all $s<S$ but on the other hand, $\xn$ is not uniformly distributed in $[0,1].$  During the course of the proof we shall deal with the correlation functions of different sequences at the same time, and thus for convenience we stress out the dependence of the correlation function on the sequence.
\par We begin with a brief description of the main heuristic idea of the proof. Given the value $S>0$, we first want to define a sequence $\xn$ whose pair correlation function is asymptotically smaller than the value that corresponds to the Poisson model for all $s<S.$ More specifically, we want to find a sequence such that  $R_2(s,N) \to 2cs$ for all $s<\frac{1}{c}S$, where $0<c<1$ is some constant. We then expect that if we contract the sequence by a factor $c,$ on the one hand the ``contracted'' sequence will have pair correlations asymptotically equal to $2s$ for all $s<S,$ on the other hand it cannot be uniformly distributed modulo $1,$ since it will be contained within the interval $[0,c].$ 
\par We thus need to take a careful look on how contracting a sequence affects its pair correlation function. Let $\xn$ be a sequence in $[0,1]$ and $0 < c < 1$ be an arbitrary constant. For any two distinct terms $x_n,x_m \in [0,1]$, 
we have \begin{equation} \label{rescaling_equiv} \begin{aligned}
    \|c(x_n - x_m) \| \ls \frac{s}{N}\quad &\Leftrightarrow \quad  \exists k \in \mathbb{Z}: \lvert c(x_n - x_m) - k\rvert \ls \frac{s}{N}
    \\& \Leftrightarrow \quad 
    \exists k \in \mathbb{Z}: \big\lvert x_n - x_m - k/c\big\rvert \ls \frac{s/c}{N} \cdot
\end{aligned} \end{equation}
We now observe that for all $N\gs 1$ sufficiently large, the only candidate for $k$ is $0$: If $k \neq 0$, then $\lvert k/c \rvert \gs 1 + \delta$ for some fixed $\delta > 0$. Now if $N\gs 1$ is  large enough such that $\dfrac{s/c}{N} < \delta$, since
$x_n - x_m \in [-1,1]$ we see that $\lvert x_n - x_m - \frac{k}{c}\rvert  > \dfrac{s/c}{N}$ and the inequalities in \eqref{rescaling_equiv} will have to fail. Hence,
\begin{align}\|c(x_n - x_m) \| \ls \frac{s}{N} \quad
&\Leftrightarrow \quad \lvert x_n - x_m\rvert \ls \frac{s/c}{N}
\\&\Leftrightarrow \quad \| x_n - x_m \| \ls \frac{s/c}{N}\hspace{3mm} \text{ and }\hspace{3mm} \lvert x_n - x_m \rvert < 1 - \frac{s/c}{N} \cdot
\end{align}
Therefore writing $R_2^{\mathfrak{X}}(s,N)$ and $R_2^{c\mathfrak{X}}(s,N)$ for the correlation functions of the sequences $\xn$ and $(cx_n)_{n\in\bN},$ respectively, we have

\begin{align}
R_2^{c\mathfrak{X}}(s,N) &= \frac{1}{N}\#\Big\{n \neq m \ls N: \| x_n - x_m \| \ls \frac{s/c}{N}\, \,  \& \, \, \lvert x_n - x_m \rvert < 1 - \frac{s/c}{N}\Big\}
\\&= \frac{1}{N}\#\Big\{ n\neq m \ls N: \| x_n - x_m \| \ls \frac{s/c}{N}\Big\} \\&\quad - \frac{1}{N}\#\Big\{ n \neq m \ls N: \| x_n - x_m \| \ls \frac{s/c}{N} \, \, \& \, \, \lvert x_n - x_m \rvert \gs 1 - \frac{s/c}{N} \Big\}
\\&= R_2^{\mathfrak{X}}(s/c,N)  -  E^{\fX}(s/c,N), 
\end{align}
where $E^{\fX}(s,N)$ is an error term, by definition equal to 
\begin{equation} \label{error}
 E^{\fX}(s,N) = \frac{1}{N}\#\Big\{ n \neq m \ls N: \| x_n - x_m \| \ls \frac{s}{N}\,\&\,\lvert x_n - x_m \rvert \gs 1 - \frac{s}{N}\Big\}.
\end{equation}
The upshot is that for sequences $\xn$ for which the error term $E^{\fX}(s/c,N)$ tends to $0$ as $N\to \infty,$ we will have
\begin{equation}\label{scaling_seq_lim}
\lim_{N \to \infty} R_2^{c\mathfrak{X}}(s,N) = \lim_{N \to \infty} R_2^{\mathfrak{X}}(s/c,N),\end{equation}
provided, of course, that the limit on the right-hand side exists. So as long as  we find a sequence $\xn$ with $E^{\fX}(s,N)\to 0$ for any $s>0$ and a number $0<c<1$ such that the correlation function $R_2^{\mathfrak{X}}(s,N)$ of $\xn$ satisfies
\begin{equation}\label{suffices_thm3}\lim_{N \to \infty} R_2^{\mathfrak{X}}(s,N) = 2cs  \qquad \text{for any } s < \frac{1}{c}S,\end{equation}
the proof of Theorem \ref{stronger_not_for_one} can be finished as already mentioned: the sequence $(cx_n)_{n\in\bN}$ satisfies  $\lim_{N \to \infty} R_2^{c\mathfrak{X} }(s,N) = 2s$ for every $s<  S$, but is only supported in $[0,c]$, which makes it impossible to be uniformly distributed.
\par We now move our discussion to how we can construct a sequence that satisfies  \eqref{suffices_thm3}. The idea is to start with some sequence $\yn$ that has PPC and ``dilute'' its pair correlations by inserting periodically terms from some other sequence $\zn$, for which the pair correlation function is $0$ for all sufficiently small scales $s>0.$  \par More precisely, we  consider a positive integer $M \gs 2S$, a sequence $\yn$ that will be specified later, as well as the binary van der Corput sequence $\zn$. By the definition of $\zn$ (see e.g. \cite[p. 127]{kuipers}) one can deduce that for any \mbox{$N \gs 1,$} all gaps between elements of the set $\{z_1,\ldots ,z_N\}$ are at least $1/(2N).$ It  follows immediately that the correlation function $R_2^{\mathcal{Z}}(s,N)$ of $\zn$ satisfies
\begin{equation}\label{vdc_no_ppc}
\lim_{N \to \infty}R_2^{\mathcal{Z}}(s,N) =  0 \qquad \text{ for any }\, 0 < s < \frac12 \cdot \end{equation}
We now build $\xn$ by inserting an element of $\zn$ after every $M-1$ elements of $\yn$; that is, writing $n = kM + r$ where $k \in \mathbb{N}$ and  $0 \ls r < M$, we define
\begin{equation} \label{xnt_def}
x_n = \begin{cases} z_k, &\text{ if $r = 0$} \\ 
y_{k(M-1)+r}, & \text{ if $r \neq 0$}. \end{cases}  \end{equation}  We then compute
\begin{align}
R_2^{ \mathfrak{X}}(s,MN) &= \frac{1}{MN}\#\Big\{n \neq m \ls (M-1)N: \|y_n - y_m\| \ls \frac{s/M}{N}\Big\}
    \\&\quad + \frac{1}{MN}\#\Big\{n \neq m \ls N: \|z_n - z_m\| \ls \frac{s/M}{N}\Big\}
    \\& \quad + \frac{2}{MN}\#\Big\{n \ls (M-1)N, m \ls N: \|y_n - z_m\| \ls \frac{s/M}{N}\Big\}\\
    &= \frac{M-1}{M}R_2^{\Upsilon}\left(s(M-1)/M,(M-1)N\right) \label{r2_of_new_seq} + \frac{1}{M}R_2^{\mathcal{Z}}(s/M,N)
    \\&\quad + \frac{2}{MN}\#\Big\{n \ls (M-1)N, m \ls N: \|y_n - z_m\| \ls \frac{s/M}{N}\Big\}.
\end{align}
In view of \eqref{vdc_no_ppc} and the assumption that $M \gs 2S$, we see that
\mbox{$\lim_{N \to \infty} R_2^{\mathcal{Z}}(s/M,N) = 0$} for any $s < S$. So we are close to the end of the proof as long as we find a sequence $\yn$ fulfilling the following properties: \newline 
${\rm (i)}$ $\yn$ has PPC, \\
${\rm (ii)}$  for every $s > 0,$  
    \begin{equation}\label{correl_with_vdc}
 \lim_{N \to \infty} \frac{1}{N}\#\Big\{n \ls (M-1)N, m \ls N: \|y_n - z_m\| \ls \frac{s/M}{N}\Big\} = \frac{2s(M-1)}{M},  
\end{equation}
${\rm (iii)}$ for every $s>0,$ the error term $E^{\Upsilon}(s,N)$ defined in \eqref{error} corresponding to the sequence $\yn$ satisfies $\lim\limits_{N\to\infty}E^{\Upsilon}(s,N)=0$.\newline 

\noindent The existence of such a sequence $\yn$ is guaranteed by the following lemma.
\begin{lem}\label{almost_sure_lem}
Let $M\gs 1$ be an integer and let $\zn$ denote the binary van der Corput sequence. Furthermore let $(Y_n)_{n\in\mathbb{N}}$ be a sequence of independent, uniformly distributed random variables in $[0,1]$. Then almost surely, the sequence $(Y_n(\omega))_{n\in\mathbb{N}}$ has PPC,  satisfies property \eqref{correl_with_vdc} and $\lim\limits_{N\to\infty}E(s,N)=0$.
\end{lem}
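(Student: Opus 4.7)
The plan is to prove each of the three almost-sure assertions separately by the same second-moment scheme and then intersect the resulting probability-$1$ events. For every fixed rational $s > 0$, I would bound the expectation and the variance of the relevant normalized count, apply Chebyshev's inequality, and invoke the Borel--Cantelli lemma either along the polynomial subsequence $N_k = k^2$ or (when the Chebyshev bound is already summable in $N$) directly. Extension from $\{N_k\}_{k\gs 1}$ to all integers $N$ and from rational $s$ to all real $s > 0$ relies on the monotonicity in $s$ of all three counts, together with a standard sandwich bound
\begin{equation*}
 F(s-\varepsilon,\, N_{k-1}) \ls F(s,N) \ls F(s+\varepsilon,\, N_k),\qquad N_{k-1}\ls N \ls N_k,
\end{equation*}
valid for $k$ sufficiently large (depending on $s$ and $\varepsilon$), using $N_k/N_{k-1}\to 1$.

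For the PPC of $(Y_n)_{n \in \mathbb{N}}$ I would write $N\cdot R_2^{\Upsilon}(s,N) = 2\sum_{n<m} I_{nm}$ with $I_{nm} = \mathds{1}[\|Y_n - Y_m\|\ls s/N]$. By translation-invariance of the distance modulo $1$, the conditional probability $\mathbb{P}(I_{nm}=1\mid Y_m)$ equals $2s/N$ deterministically, so indicators on pairs sharing one index are uncorrelated and indicators on disjoint pairs are independent. Consequently $\mathbb{E}[R_2^{\Upsilon}(s,N)] \to 2s$ and $\mathrm{Var}(R_2^{\Upsilon}(s,N)) = O_s(1/N)$, which suffices for the Borel--Cantelli step along $N_k = k^2$. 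For the mixed correlation \eqref{correl_with_vdc}, setting
\begin{equation*}
 X_n := \#\{m\ls N : \|Y_n - z_m\| \ls s/(MN)\},\qquad 1\ls n \ls (M-1)N,
\end{equation*}
gives (for fixed $N$) independent summands with mean $2s/M$; the minimum-gap property $\gs 1/(2N)$ of $\{z_1,\ldots,z_N\}$ bounds each $X_n$ by a constant $C_{s,M}$ depending only on $s$ and $M$. Hence $\mathrm{Var}\bigl(\sum_n X_n\bigr) = O_s(N)$ and the same Borel--Cantelli argument applies.

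For the error term I would observe that $\{\|Y_n-Y_m\|\ls s/N\}\cap\{|Y_n-Y_m|\gs 1-s/N\}$ reduces to $\{|Y_n-Y_m|\gs 1-s/N\}$, a set of planar Lebesgue measure $(s/N)^2$. Thus $\mathbb{E}[E^{\Upsilon}(s,N)] = s^2(N-1)/N^2 = O_s(1/N)$; a covariance analysis parallel to the PPC step, but now with per-pair probability of order $(s/N)^2$ rather than $s/N$, yields $\mathrm{Var}(E^{\Upsilon}(s,N)) = O_s(1/N^2)$, so Chebyshev already provides a bound summable directly in $N$ and no subsequence is needed.

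The main technical obstacle is the sandwich step that upgrades convergence from the countable dense set of pairs $(s, N_k)$ to all pairs $(s, N)$; this is routine in view of the monotonicity of each of the three counts as functions of $s$ and of $N_k/N_{k-1}\to 1$. Everything else is bookkeeping on conditional probabilities and on the van der Corput gap bound, and the three probability-$1$ events are combined by countable intersection over rational $s$, producing the desired full-measure realization of $(Y_n(\omega))_{n\in\mathbb{N}}$.
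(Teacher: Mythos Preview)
Your proposal is correct and follows essentially the same second-moment/Borel--Cantelli scheme as the paper: compute mean and variance of each normalized count, apply Chebyshev along a polynomial subsequence, and extend to all $N$ and all $s>0$ via monotonicity and a sandwich bound. The only cosmetic differences are that the paper cites an external reference for the PPC part and handles the error term via the product bound $E^{\Upsilon}(s,N)\ls \tfrac{2}{N}\,\#\{n\ls N:Y_n\gs 1-s/N\}\cdot\#\{n\ls N:Y_n\ls s/N\}$ rather than your direct pair analysis (your observation that $\mathrm{Var}(E^{\Upsilon}(s,N))=O_s(1/N^2)$, making the subsequence unnecessary for that term, is a small refinement).
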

\begin{proof}
It is a well\,--\,known fact that any sequence $(Y_n)_{n\in\mathbb{N}}$ of independent, uniformly distributed random variables in $[0,1]$ has PPC  almost surely. For a proof, we refer to \cite[Appendix B]{our_other_paper} in the setup of higher order correlations, or also to \cite{hinrichs} for sequences in higher dimensions. \par
We now prove the second property, namely that $(Y_n)_{n\in\mathbb{N}}$ satisfies \eqref{correl_with_vdc} almost surely.  Choose an  arbitrary $s > 0$. Writing 
\[Y_{n,N}(s) = \#\Big\{1\ls m \ls N: \|Y_n - z_m\| \ls \frac{s}{MN}\Big\}, \quad  n = 1, \ldots, (M-1)N,\]
the quantity in question is equal to 
\[ I_{N}(s) =  \frac{1}{N} \sum_{n \ls (M-1)N} Y_{n,N}(s).\]
It is easy to see that 
\[\mathbb{E}[ I_N(s) ] = \frac{2s(M-1)}{M},  \qquad N\gs 1 \]
and we proceed to compute the variance of $I_N.$ We first observe that for any $N\gs 1$, the functions $(Y_{n,N})_{n=1}^{N(M-1)}$ form a family of independent random variables.  Furthermore, for $2^k \ls N < 2^{k+1}$  we have
$(z_m)_{m=1}^{N} \subseteq \{\frac{\ell}{2^{k+1}}, \ell = 1,\ldots, 2^{k+1}\}$,
so we can bound
\[Y_{n,N}(s) \ls \#\Big\{1 \ls m \ls 2^{k+1}: \frac{m}{2^{k+1}} \in \Big[y_n - \frac{2s/M}{2^{k+1}}, 
y_n + \frac{2s/M}{2^{k+1}}\Big]\Big\}
\ls \frac{4s}{M} + 2.\]
This implies that
\begin{align} \label{variance}
    \text{Var}[I_N]\! =\!\frac{1}{N^2}\hspace{-3mm}\sum_{n=1}^{N(M-1)}\hspace{-3mm}\text{Var}[Y_{n,N}] = \frac{1}{N^2}\hspace{-3mm}\sum_{n=1}^{N(M-1)}\hspace{-3mm}\Big( \mathbb{E}[Y_{n,N}^2] 
    -\mathbb{E}[Y_{n,N}]^2\Big) \!=\! \mathcal{O}_{M,s}\Big(\frac{1}{N}\Big).
\end{align}
To finish the proof we use a standard approximation argument (for more details see e.g. \cite{rz2}). Let  $\gamma > 0$ and consider the subsequence $$B_N := \left\lceil{N^{\gamma}}\right\rceil,\qquad  N\gs 1.$$
For $s>0$ fixed, we use Chebyshev's inequality, the first Borel--Cantelli Lemma and the variance estimate from \eqref{variance} to see that
 \begin{equation}\label{correl_vdc_sub}\lim_{N \to \infty} I_{B_N}(s) = \frac{2s(M-1)}{M} \qquad \text{ almost surely }
 \end{equation}
  (where the zero\,--\,measure set depends on $s$). Repeating the argument for all $s$ lying in a dense, countable subset of $\mathbb{R}_+$ and employing the monotonicity of $I_N(s)$ as a function of $s$, we see that \eqref{correl_vdc_sub} actually holds for all $s>0$. Next, if   $N\gs 1$ is an arbitrary integer, we let $K\gs 1$ be such that $B_K \ls N < B_{K+1}$ and observe that for any $s>0,$
\begin{align} \label{subsequence_suffices}
\frac{B_K}{B_{K+1}} I_{B_K}\Big(\frac{B_Ks}{B_{K+1}}\Big) \ls I_N(s) \ls \frac{B_{K+1}}{B_K}I_{B_{K+1}}\Big( \frac{B_{K+1}s}{B_K}\Big).
\end{align}
Since $\lim\limits_{N \to \infty} \dfrac{B_N}{B_{N+1}} = 1$, we deduce from \eqref{correl_vdc_sub} that \eqref{correl_with_vdc} holds almost surely for any fixed $s > 0$.   
\par Finally, it remains to prove that the third property, namely $\lim_{N\to \infty} E(s,N)=0,$ is satisfied almost surely. For the error term $E^{\Upsilon}(s,N)$ corresponding to $\yn$, we  provide the upper bound
\begin{equation} \label{error_upper_bound}  \begin{aligned} 
 E^{\Upsilon}(s,N) &\ls \frac{1}{N}\#\Big\{ n \neq m \ls N: \lvert y_n - y_m \rvert \gs 1 - \frac{s}{N}\Big\}\\
 &\ls \frac{2}{N} \#\Big\{ n \ls N: y_n \gs 1 - \frac{s}{N}\Big\}\cdot\#\Big\{ n \ls N: y_n \ls  \frac{s}{N} \Big\}.
\end{aligned} \end{equation} 
It is therefore sufficient to prove that almost surely, we have
\begin{equation} \label{error_to_zero}
    \lim_{N \to \infty} \frac{1}{N}\#\Big\{ n \ls N: Y_n \gs 1 - \frac{s}{N}\Big\}\cdot\#\Big\{ n \ls N: Y_n \ls  \frac{s}{N} \Big\} =0.
\end{equation} 
This can be proved using a mean\,--\,variance argument, completely analogous to the one used to prove \eqref{correl_with_vdc}. We leave the details to the interested reader.
\end{proof}
Having proved the existence of a sequence $\yn$ with the three desired properties as in Lemma \ref{almost_sure_lem}, we define $\xn$ as in \eqref{xnt_def}. In view of these properties,  \eqref{r2_of_new_seq} implies that for any $s>0,$ the pair correlation $R_2^{\fX}(s,N)$ of $\xn$ satisfies
\begin{equation}
   R_2^{\fX}(s, MN) = \Big(1 - \frac{1}{M^2}\Big)\,2s +  \frac{1}{M} R_2^{\mathcal{Z}}(s/M,N) + o(1), \quad N\to \infty. 
\end{equation}
From now on, we focus our attention on values $s<M/2.$ By the assumption that $M\gs 2S,$ these values include all scales $s<S.$ Since for $s < M/2$ we have $ R_2^{\mathcal{Z}}(\frac{s}{M}, MN\big) \to 0$, we deduce that for $s< M/2,$  
\[ R_2^{\fX}(s, MN) = \Big(1 - \frac{1}{M^2}\Big)\,2s + o(1),\qquad N\to\infty.\] Then a standard approximation argument, similar to the one we used to derive \eqref{subsequence_suffices}, gives
\begin{equation} \label{penultimate}
\lim_{N \to \infty}R_2^{\fX}(s, N) =  \Big(1 - \frac{1}{M^2}\Big)\,2s \qquad \text{ for all } s< \frac{M}{2}\cdot 
\end{equation}
Finally, we set $c = 1 - \dfrac{1}{M^2}\in (0,1)$ and consider the sequence $(cx_n)_{n\in\bN}.$ In order to prove that this sequence satisfies the statement of Theorem \ref{small_scales_suffice}, it remains to verify that the error term $E^{\fX}(s,N)$ corresponding to $(x_n)_{n\in\bN}$ tends to $0$ for any $s>0.$ As in \eqref{error_upper_bound}, we find \vspace{-4mm}
\begin{align}\label{alternative_error_estimate}
E^{\fX}(s,MN) & = \frac{1}{MN}\#\Big\{ n \neq m \ls MN: \| x_n - x_m \| \ls \frac{s}{MN}\,\&\,\lvert x_n - x_m \rvert \gs 1 - \frac{s}{MN}\Big\}\\
     &\ls \frac{M-1}{M}E^{\Upsilon}(s(M-1)/M,(M-1)N) + \frac{1}{M}E^{\mathcal{Z}}(s/M,N)
     \\&\,\,+ \frac{1}{MN}\#\Big\{n \ls (M-1)N: y_n \ls \frac{s/M}{N}\Big\}\cdot \#\Big\{n \ls N: z_n \gs 1-\frac{s/M}{N}\Big\}
    \\&\,\,+ \frac{1}{MN}\#\Big\{n \ls (M-1)N: y_n \gs 1-\frac{s/M}{N}\Big\}\cdot \#\Big\{n \ls N: z_n \ls \frac{s/M}{N}\Big\}.
\end{align}
Using that $s/M < 1/2$ and the property 
$\{z_1,\ldots z_N\} \subseteq [1/(2N),1-1/(2N)]$ that follows straightforwardly from the definition of the binary van der Corput sequence, we see that
\[\#\Big\{n \ls N: z_n \gs 1-\frac{s/M}{N}\Big\} = \#\Big\{n \ls N: z_n \ls \frac{s/M}{N}\Big\} = 0,\]
whence $E^{\mathcal{Z}}(s/M,N)=0$ and  
\[E^{\fX}(s,MN) \ls \frac{M-1}{M}E^{\Upsilon}(s(M-1)/M,(M-1)N).\]
By the construction of $\yn$, $E^{\Upsilon}(s(M-1)/M,(M-1)N) \to 0$ as $N\to\infty$, so
$\lim\limits_{N\to\infty}E^{\fX}(s,MN)=0$ and subsequently
$\lim\limits_{N\to\infty}E^{\fX}(s,N)=0$ follows.
According to the discussion in the beginning of the section, this allows us to deduce that for the rescaled sequence $(cx_n)_{n\in\bN}$ we have
\[ R_2^{c\mathfrak{X}}(s,N) = R_2^{\mathfrak{X}}(s/c,N) + o(1), \quad N\to \infty. \]
Combined with  \eqref{penultimate}, this implies that 
\[ \lim_{N\to\infty} R_2^{c\fX}(s,N) = 2s \quad \text{ for all } s<S\]
which concludes the proof of Theorem \ref{stronger_not_for_one} by the discussion above.

\section{ Proof of Theorem  \ref{beta_implies_alpha} }  
We  now proceed to the proof of Theorem \ref{beta_implies_alpha}: we prove that when $\alpha<\beta$, the property of $\beta$\,--\,PPC is stronger than that of $\alpha$\,--\,PPC.  We shall distinguish two different cases according to whether $\beta=1$ or $\beta<1.$ In both cases, by Lemma \ref{integral_equivalence} and Lemma \ref{integral_lower_bound}, it suffices to show that for all $s > 0$
\[\limsup_{N \to \infty} I_\alpha(s,N) \ls s^2.\]

\subsection{The case $0<\beta <1$}  Let $0\ls \alpha < \beta < 1.$  We bound the function $F_\alpha(t,s,N)$ from above in terms of $F_\beta(t,s,N)$. Writing
$$ M = \Big\lceil \frac{ N^{\beta-\alpha}}{2} \Big\rceil,$$ 
we use the same reasoning as in the proof of Theorem  \ref{small_scales_suffice}: we note that
\[ B\Big(t, \frac{s}{2N^{\alpha}}\Big) \subseteq \bigcup_{|\ell| \ls M }\hspace{-1mm} B\Big(t + \frac{\ell s}{N^\beta}, \frac{s}{2N^\beta}\Big),\]
whence
\begin{equation}\label{beta_alpha_eq}
F_\alpha(t,s,N) \ls \frac{1}{ N^{\beta - \alpha}}\sum_{|\ell|\ls  M} F_\beta\Big(t + \frac{ \ell s}{N^{\beta}},s,N\Big).
\end{equation}
Another application of the Cauchy-Schwarz inequality gives 
\begin{align*}
\int_{0}^{1} F_{\alpha}(t,s,N)^2 \,\mathrm{d}t 
&\ls \int_{0}^{1} \frac{1}{N^{2(\beta - \alpha)}}\Bigg(\sum_{ |\ell|\ls   M } F_\beta\Big(t + \frac{ \ell s}{N^{\beta}},s,N\Big)\Bigg)^2 \,\mathrm{d}t 
\\& \ls \int_{0}^{1} \frac{ N^{\beta - \alpha}+1 }{N^{2(\beta - \alpha)}}  \sum_{|\ell|\ls  M }F_\beta\Big(t + \frac{ \ell s}{N^{\beta}},s,N\Big)^2 \,\mathrm{d}t
\\&= \frac{N^{\beta - \alpha}+1 }{N^{2(\beta - \alpha)}} \sum_{|\ell|\ls  M} \int_{0}^{1} F_{\beta}(t,s,N)^2 \,\mathrm{d}t \\& \ls
\Big(1 + \frac{3 }{ N^{\beta-\alpha}} \Big) \int_{0}^{1} F_{\beta}(t,s,N)^2 \,\mathrm{d}t.
\end{align*}
By the assumption of $\beta$\,--\,PPC and Lemma \ref{integral_equivalence}, we have
\[\lim_{N \to \infty}\int_{0}^{1} F_\beta(t,s,N)^2 \mathrm{d}t = s^2,\]
so the result follows.

\subsection{The case $\beta=1$} 
Assume the sequence $\xn$ has PPC and let $s > 0$. For all integers $K, N \gs 1$ we set 
\[M = M(N,K) = \Bigg\lceil \frac{N^{1-\alpha}}{2K}\Bigg\rceil. \]
As in the previous case, we have 
\[ B\Big(t, \frac{s}{2N^{\alpha}}\Big) \subseteq \bigcup_{|\ell| \ls M }\hspace{-1mm} B\Big(t + \frac{\ell sK}{N}, \frac{sK}{2N}\Big)  \]
and therefore
\begin{equation*}
F_\alpha(t,s,N) \ls \frac{1}{ N^{1- \alpha}}\sum_{|\ell|\ls  M} F_1\Big(t + \frac{ \ell sK}{N},sK,N\Big).
\end{equation*}
By the Cauchy-Schwarz inequality and argumentations as previously, it follows that
\begin{align*}
\int_{0}^{1} F_{\alpha}(t,s,N)^2 \,\mathrm{d}t 
&\ls \int_{0}^{1} \frac{1}{N^{2(1-\alpha)}}\Big(\sum_{ |\ell|\ls   M } F_1\Big(t + \frac{ \ell sK}{N},sK,N\Big)\Big)^2 \,\mathrm{d}t 
\\& \ls \int_{0}^{1} \frac{2M+1}{N^{2(1-\alpha)}}  \sum_{|\ell|\ls  M }F_1\Big(t + \frac{ \ell sK}{N},sK,N\Big)^2 \,\mathrm{d}t
\\&= \frac{(2M+1)^2}{N^{2(1-\alpha)}} \int_{0}^{1} F_{1}(t,sK,N)^2 \,\mathrm{d}t.
\end{align*}
Clearly,
$$\lim_{N\to\infty} \frac{(2M+1)^2}{N^{2(1-\alpha)}} = \frac{1}{K^2}$$
and by the assumption of PPC we have
\[\lim_{N \to \infty}\int_{0}^{1} F_1(t,sK,N)^2\, \mathrm{d}t = (sK)^2 + sK.\]
Therefore, we can deduce that
\[\limsup_{N \to \infty} \int_{0}^{1} F_{\alpha}(t,s,N)^2 \,\mathrm{d}t 
\ls \frac{s^2K^2 + sK}{K^2}\]
for any $K \in \mathbb{N}$. Letting $K \to \infty$, the statement follows.
\section{Proof of Theorem \ref{beta_threshold}}

In the current section we fix a value of $0< \beta <1$ and construct a sequence $\xn$ that has $\alpha$\,--\,PPC for all $\alpha<\beta$ but does not have $\alpha$\,--\,PPC for any $\alpha \gs \beta.$ The construction is based on the following idea: we start with an arbitrary sequence $(y_n)_{n\in\bN}$ that has PPC. The required sequence $\xn$ is then defined in a way that for each $1 \ls n \ls N$,  the number $y_n$   occurs asymptotically $N^{\tfrac{1}{\beta}-1}$ times in the first $N^{\tfrac{1}{\beta}}$ elements of $\xn$. In that way, the contribution to $R_2(\alpha; s,N)$ of terms $x_i,x_j$ with $1\ls  i\neq j \ls N$ and $x_i = x_j$ will be of order $N^{2-\beta}$.
Looking at the  factor $\frac{1}{N^{2-\alpha}}$ in the definition of $R_2(\alpha;s,N)$, we see
that the contribution of these pairs is negligible when $\alpha < \beta$, whereas for $\alpha > \beta$, it is impossible to have the Poissonian correlation property since the correlation function will diverge.\par To achieve the aforementioned construction, we employ an ``expansion'' of positive integers with respect to a sequence that grows like $\frac{1}{\beta}$\,--\,th powers of integers. \par 
\vspace{4mm}

Fix $0 < \beta <1$ and let $(y_n)_{n\in\bN}$ be a sequence with PPC such that $y_m \neq y_n$ whenever $m\neq n$. (The existence of such a sequence is justified, for example, by \mbox{\cite[Theorem 1]{rs}}.)
Also, consider the sequence of indices 
\[ A_N = N \lfloor N^{\frac{1}{\beta}-1 } \rfloor, \qquad N\gs 1.  \]
Every $N\gs 1$ can be written uniquely as 
\begin{equation} \label{beta_expansion}
N = A_M + \varepsilon_N\lfloor M^{\frac{1}{\beta}-1}\rfloor + q_N(M+1) + r_N, 
\end{equation}
where
\begin{enumerate}
    \item[(i)] $M = M_N \gs 1$ is the unique integer such that $A_M < N \ls A_{M+1},$
    \item[(ii)] $\varepsilon_N \in \{ 0, 1\}, \quad q_N \gs 0, \quad 1 \ls r_N \ls M+1, \quad $ and
    \item[(iii)] $\varepsilon_N = 0$ \text{ if and only if } $A_M < N \ls A_M + \lfloor M^{\frac{1}{\beta} - 1}\rfloor.$
\end{enumerate}
\vspace{3mm}

\noindent We now define a new sequence $\xn$ by letting $x_1 = y_1$ and 
\[ x_N = \begin{cases} y_{M+1}, &\text{ if } \varepsilon_N =0, \\ 
y_{r_N }, &\text{ otherwise.}
\end{cases}\]
Here $M\gs 1, \varepsilon_N \in \{0,1\} $ and $r_N\gs 1$ are as in \eqref{beta_expansion}. \par 

\vspace{2mm}

We aim to show that $\xn$ has the required property, that is, it does not have $\beta$\,--\,PPC but it has $\alpha$\,--\,PPC for any $\alpha<\beta.$ We first need the following lemma.

\begin{lem}\label{same_copies} For any $M\gs 1,$ we have
\begin{equation}\label{number_of_copies} \#\{1\ls n \ls A_M : x_n = y_k\} = \lfloor M^{\frac{1}{\beta} -1}\rfloor, \quad k=1,2,\ldots, M. \end{equation}
\end{lem}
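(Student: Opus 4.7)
The plan is to proceed by induction on $M$. The base case $M=1$ follows immediately from $A_1 = 1 \cdot \lfloor 1^{1/\beta-1}\rfloor = 1$, since $x_1 = y_1$ gives $\#\{n \ls 1 : x_n = y_1\} = 1 = \lfloor 1^{1/\beta-1}\rfloor$. For the inductive step, I would fix $M$ and analyze the block of indices with $A_M < n \ls A_{M+1}$ separately, using the decomposition in \eqref{beta_expansion}. Setting $D_M := \lfloor (M+1)^{1/\beta-1}\rfloor - \lfloor M^{1/\beta-1}\rfloor$, a direct calculation yields
\[ A_{M+1} - A_M \;=\; \lfloor M^{1/\beta-1}\rfloor \,+\, (M+1)\,D_M,\]
and this reflects the two parts of \eqref{beta_expansion}: a first segment of $\lfloor M^{1/\beta-1}\rfloor$ indices carrying $\varepsilon_N = 0$, and a second segment of $(M+1)D_M$ indices carrying $\varepsilon_N = 1$, parametrized by $0 \ls q_N \ls D_M - 1$ and $1 \ls r_N \ls M+1$.

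Having split the block in this way, the counting of each $y_k$ in $(A_M, A_{M+1}]$ is transparent. On the first segment every $x_n$ equals $y_{M+1}$, so this segment contributes $\lfloor M^{1/\beta-1}\rfloor$ copies of $y_{M+1}$ and none of $y_1,\ldots,y_M$. On the second segment, for each fixed $q_N$ the index $r_N$ runs through $\{1,\ldots,M+1\}$ exactly once, so every $y_k$ with $1 \ls k \ls M+1$ appears exactly $D_M$ times in this segment.

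Combining these counts with the inductive hypothesis yields the desired identity at level $M+1$: for $1 \ls k \ls M$, the total count up to $A_{M+1}$ equals $\lfloor M^{1/\beta-1}\rfloor + D_M = \lfloor (M+1)^{1/\beta-1}\rfloor$; for $k = M+1$, one first observes from the definition of $\xn$ that $y_{M+1}$ does not occur in $x_1,\ldots,x_{A_M}$ (because for any $n$ in an earlier block the recipe produces only terms $y_j$ with $j \ls M$), and so the count is $0 + \lfloor M^{1/\beta-1}\rfloor + D_M = \lfloor (M+1)^{1/\beta-1}\rfloor$ as well. The only genuinely delicate part is the bookkeeping — namely verifying the length identity for $A_{M+1} - A_M$ and the non-appearance of $y_{M+1}$ in earlier blocks — but both points unwind directly from \eqref{beta_expansion} and the definition of $x_N$.
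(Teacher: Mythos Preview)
Your proof is correct and follows essentially the same inductive approach as the paper: both split the block $(A_M,A_{M+1}]$ into the $\varepsilon_N=0$ segment contributing $\lfloor M^{1/\beta-1}\rfloor$ copies of $y_{M+1}$ and the $\varepsilon_N=1$ segment where $r_N$ cycles through $1,\ldots,M+1$ evenly. The only cosmetic difference is that the paper phrases the second segment via a divisibility/symmetry argument (all counts equal, then divide $A_{M+1}$ by $M+1$), whereas you compute the multiplicity $D_M$ explicitly; the substance is the same.
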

\begin{proof}
We prove this statement by induction on $M$. Trivially, the statement holds for $M=1$, so we assume \eqref{number_of_copies} is true for some fixed $M \gs 1$. By construction of the sequence, $x_N = y_{m+1}$ for $A_M < N \ls A_M + \lfloor M^{\frac{1}{\beta} - 1}\rfloor = (M+1)\lfloor M^{\frac{1}{\beta} - 1}\rfloor.$
This implies that for all $\ell,k \in [M+1]$,
\begin{equation}\label{all_same_copies}\#\{ n \ls (M+1)\lfloor M^{\frac{1}{\beta} - 1}\rfloor : x_n = y_k\} = \#\{ n \ls (M+1)\lfloor M^{\frac{1}{\beta} - 1}\rfloor : x_n = y_{\ell}\}.\end{equation}
Since both $A_{M+1}$ and $A_M +\lfloor M^{\frac{1}{\beta} - 1}\rfloor =  (M+1)\lfloor M^{\frac{1}{\beta} - 1}\rfloor$ are divisible by $M+1$, so is their difference. Thus, as $N$ runs through the values $(M+1)\lfloor M^{\frac{1}{\beta} - 1}\rfloor< N \ls A_{M+1}$, it attains each residue class mod ${M+1}$  the same number of times. This means that $r_N$ attains each value between $1$ and $M+1$ the same number of times, and finally $x_N$ assumes the values $y_1,\ldots, y_{M+1}$ equally often. Hence, we can extend \eqref{all_same_copies} to $1 \ls n \ls A_{M+1}$.
Note that no element $y_k$ with $k \gs M+2$ appears in $\{x_1,\ldots, x_{A_{M+1}}\}$, so the statement follows.
\end{proof}
We can now show that $\xn$ does not have $\beta$\,--\,PPC. Let $s>0$. Whenever $m,n \ls A_M,$ the inequality $\|x_m - x_n\| \ls s/A_M^{\, \beta}$ is satisfied: 
\begin{enumerate}
\item[$\bullet$] either when $x_m=y_{k}$ and $x_n=y_\ell$ for some indices $1\ls k,\ell \ls M$ with $k\neq \ell$ such that $\|y_k - y_\ell\| \ls s/A_M^{\, \beta},$ or 
\item[$\bullet$] when $x_n=x_m=y_k$ for some $1\ls k\ls M.$ \end{enumerate}
Therefore by Lemma \ref{same_copies}, as $M\to \infty$ we have 
\begin{align*}
R_2(\beta; s, A_M) &= \frac{1}{A_M^{2-\beta}}\#\Big\{ 1\ls m \neq n \ls A_M : \| x_m - x_n \| \ls \frac{s}{A_M^{\, \beta}} \Big\} \\
& =  \frac{M\lfloor M^{\frac{1}{\beta}-1}\rfloor^2}{A_M^{2-\beta}} \cdot \frac{1}{M}\#\Big\{1\ls k \neq \ell \ls M : \|y_k - y_\ell\| \ls \frac{s}{A_M^{\beta}} \Big\} \\
 & \qquad+ \frac{1}{A_M^{2-\beta}}   \lfloor M^{\frac{1}{\beta}-1} \rfloor \big(\lfloor M^{\frac{1}{\beta}-1} \rfloor -1 \big)  M \\
&=  \big(1 + o(1) \big)\cdot \frac{1}{M}\#\Big\{1\ls k \neq \ell \ls M : \|y_k - y_\ell\| \ls \frac{s}{A_M^{\beta}} \Big\} + 1 +o(1).
\end{align*}
Let $\varepsilon>0.$ Since $\lim\limits_{M\to \infty} \dfrac{A_M^{\beta}}{M} =1, $
for all $M\gs 1$  large enough we have 
\begin{equation} \label{comparision}
\begin{aligned}
R_2^{\Upsilon}((1-\varepsilon)s,M) &= \frac{1}{M}\#\Big\{1\ls k \neq \ell \ls M : \|y_k - y_\ell\| \ls (1-\varepsilon)\frac{s}{M} \Big\} \\
&\ls \frac{1}{M}\#\Big\{1\ls k \neq \ell \ls M : \|y_k - y_\ell\| \ls \frac{s}{A_M^{\beta}} \Big\}  \\
& \ls  \frac{1}{M}\#\Big\{1\ls k \neq \ell \ls M : \|y_k - y_\ell\| \ls (1+\varepsilon)\frac{s}{M} \Big\} \\  
& = R_2^{\Upsilon}((1+\varepsilon)s,M),\end{aligned} \end{equation}
where we write $R_2^{\Upsilon}(s,N)$ for the pair correlation function of the sequence $\yn$ By the hypothesis that $\yn$ has PPC, we deduce that 
\[\lim_{M\to\infty} \frac{1}{M}\#\Big\{1\ls k \neq \ell \ls M : \|y_k - y_\ell\| \ls \frac{s}{A_M^{\beta}} \Big\} = 2s.   \]
Therefore,
\[ \lim_{M\to\infty} R_2(\beta; s, A_M) =  2s + 1, \]
which implies that $\xn$ does not have $\beta$\,--\,PPC. By Theorem \ref{beta_implies_alpha},  it does not have $\alpha$\,--\,PPC for any $\alpha>\beta.$ (We leave it to the interested reader to verify that actually for $\alpha>\beta,$  $\lim\limits_{N\to\infty}R_2(\alpha;s,N)=\infty$ for any $s>0.)$  
\vspace{2mm}

\noindent We now fix $0 \ls \alpha <\beta.$ Arguing as previously we obtain
\begin{align*}
R_2(\alpha;s,A_M) &= \frac{1}{A_M^{2-\alpha}}  \#\Big\{ 1\ls m \neq n \ls A_M : \| x_m - x_n \| \ls \frac{s}{A_M^\alpha} \Big\} \\
& =  \frac{ \lfloor M^{\frac{1}{\beta}-1}\rfloor^2}{A_M^{2-\alpha}} \cdot  \#\Big\{1\ls k \neq \ell \ls M : \|y_k - y_\ell\| \ls \frac{s}{A_M^{\alpha}} \Big\} \\
 & \qquad+ \frac{1}{A_M^{2-\alpha}}   \lfloor M^{\frac{1}{\beta}-1} \rfloor \big(\lfloor M^{\frac{1}{\beta}-1} \rfloor -1 \big)  M.
\end{align*}
We first notice that since $\alpha < \beta,$
\[\frac{M}{A_M^{2-\alpha}}   \lfloor M^{\frac{1}{\beta}-1} \rfloor \big(\lfloor M^{\frac{1}{\beta}-1} \rfloor -1 \big)   = 
\frac{1}{M^{1 - \frac{\alpha}{\beta}}} + o(1) = o(1),\qquad M\to \infty.  \]
It remains to understand the asymptotic behaviour of the factor
\[   \#\Big\{1\ls k \neq \ell \ls M : \|y_k - y_\ell\| \ls \frac{s}{A_M^{\alpha}} \Big\}.\]
By the definition of  $(A_M)_{M \in \mathbb{N}},$ we have $\lim\limits_{M\to \infty}\dfrac{A_M^{\alpha} }{ M^{\frac{\alpha}{\beta}}} =1.$  The sequence $\yn$ has PPC, so by Theorem \ref{beta_implies_alpha} it also has $(\alpha/\beta)$\,--\,PPC. Thus arguing as in \eqref{comparision} we deduce that 
\[ \lim_{M\to \infty} \frac{1}{M^{2-\frac{\alpha}{\beta}}}\#\Big\{1\ls k \neq \ell \ls M : \|y_k - y_\ell\| \ls \frac{s}{A_M^{\alpha}} \Big\} = 2s. \]
Therefore,
\begin{align*}
R_2(\alpha;s,A_M) &= \frac{\lfloor M^{\frac{1}{\beta}-1}\rfloor^2}{A_M^{2-\alpha}}\#\Big\{1\ls k \neq \ell \ls M : \|y_k - y_\ell\| \ls \frac{s}{A_M^{\alpha}} \Big\} + o(1) \\
&= \frac{\lfloor M^{\frac{1}{\beta}-1}\rfloor^2}{A_M^{2-\alpha}}  M^{2-\frac{\alpha}{\beta}} \big( 2s + o(1) \big) + o(1)\\
&= 2s + o(1), \qquad  M\to \infty.
\end{align*}
To complete the proof that $\xn$ has $\alpha$\,--\,PPC, we observe that $\lim\limits_{M\to \infty}\dfrac{A_{M+1}}{A_M}=1$
and by an approximation argument analogous to \eqref{subsequence_suffices}, we deduce that
\[ \lim\limits_{N\to \infty} R_2(\alpha; s, N) = 2s.\]


\section{Proof of Theorem \ref{weak01law} }
Theorem \ref{weak01law} will follow from a standard argument that, to the best of our knowledge, was employed for the first time in the context of pair correlations in \cite{rs} and subsequently has been used quite often in the relevant literature, see e.g. \cite{all} and the references therein. We have decided to provide only the basic elements of the proof. \par We stress out the dependence of the pair correlation function with parameter $\beta$ of the sequence $(a_nx)_{n\in\bN}$ on $x\in [0,1)$ by writing
\[R_2(\beta;s,N,x) := \frac{1}{N^{2-\beta}}\left\{1 \ls i \neq j \ls N: \norm{a_ix-a_jx} \ls \frac{s}{N^{\beta}}\right\}.\]
It then suffices to show that for Lebesgue\,--\,almost all $x \in [0,1),$ we have
\[\lim_{N \to \infty} R_2(\beta;s,N,x) = 2s\qquad \text{ for any } s>0. \]
First we fix a value of $s>0$. It follows by the definition of $R_2(\beta;s,N,x)$ that
\[\int_{0}^{1}R_2(\beta;s,N,x)\,\mathrm{d}x = \frac{2(N-1)s}{N} \cdot \]
A sufficient upper bound on the variance is given by the following lemma.
\begin{lem}
For  $\varepsilon := \frac12(1-\beta) > 0$, we have 
\[\int_{0}^{1}\left(R_2(s,N,\beta,x) - \frac{2(N-1)s}{N}\right)^2 \mathrm{d}x \ll N^{-\varepsilon}, \quad N\to \infty.\]
\end{lem}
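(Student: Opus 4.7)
The plan is to compute the variance via Parseval's identity on the circle, then split the resulting double sum into diagonal and off-diagonal contributions. First, I introduce the mean-zero modification
\[ g(y) := \one_{\|y\| \ls s/N^{\beta}}(y) - \frac{2s}{N^{\beta}}, \]
and note that since for every nonzero integer $h$ the map $x \mapsto hx$ preserves Lebesgue measure on $\bR/\bZ$, we have $\int_0^1 g(hx)\,\mathrm{d}x = 0$. Consequently,
\[ R_2(\beta;s,N,x) - \frac{2(N-1)s}{N} = \frac{1}{N^{2-\beta}}\sum_{\substack{i,j\ls N \\ i\neq j}} g\big((a_i - a_j)x\big). \]
Squaring and integrating,
\[ \int_0^1 \left(R_2(\beta;s,N,x) - \frac{2(N-1)s}{N}\right)^2 \mathrm{d}x = \frac{1}{N^{4-2\beta}}\sum_{\substack{i\neq j \\ i'\neq j'}} I(a_i - a_j,\,a_{i'}-a_{j'}), \]
where $I(h,h') := \int_0^1 g(hx)g(h'x)\,\mathrm{d}x$. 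The Fourier coefficients of $g$ satisfy $\widehat{g}(0) = 0$ and $|\widehat{g}(k)| \ls \min\{2s/N^\beta,\, 1/(\pi|k|)\}$ for $k\neq 0$; in particular $\|g\|_2^2 \ls 2s/N^{\beta}$.

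Next I split the outer sum according to whether $a_i - a_j = \pm(a_{i'} - a_{j'})$ (\emph{diagonal}) or not (\emph{off-diagonal}). For the diagonal, $|I(h,\pm h)| = \|g\|_2^2 = \mathcal{O}(s/N^\beta)$, and the number of diagonal quadruples equals twice the additive energy $\sum_h r(h)^2$ of $\{a_1,\dots,a_N\}$, where $r(h) := \#\{(i,j) : i\neq j,\, a_i - a_j = h\}$. Since $r(h) \ls N-1$ and $\sum_h r(h) = N(N-1)$, the additive energy is bounded by $N^3$, so the diagonal contribution is
\[ \ll \frac{N^3 \cdot sN^{-\beta}}{N^{4-2\beta}} = \frac{s}{N^{1-\beta}} = \mathcal{O}(N^{-2\varepsilon}), \]
which already beats the required bound.

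For the off-diagonal part, I apply Parseval again to write $I(h,h') = \sum_{kh+k'h'=0,\, k,k'\neq 0} \widehat{g}(k)\widehat{g}(k')$. Parametrising the solutions of $kh + k'h' = 0$ by $d = \gcd(|h|,|h'|)$ (so $k = \mp(h'/d)t$, $k' = \pm(h/d)t$, $t\in\bZ\setminus\{0\}$) and using the decay $|\widehat{g}(k)| \ls 1/(\pi|k|)$, I obtain the refined estimate $|I(h,h')| \ll d^2/|hh'|$; combined with the trivial $L^2$ bound $|I(h,h')| \ls \|g\|_2^2 \ll s/N^{\beta}$, this is amenable to summation. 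Crucially, because $(a_n)_{n\in\bN}$ is a strictly increasing sequence of positive integers, $|a_i-a_j| \gs |i-j|$, which yields $\sum_{h\neq 0} r(h)/|h| = \sum_{i\neq j}1/|a_i-a_j| = \mathcal{O}(N\log N)$. Combining these bounds via a standard $\gcd$-sum manipulation (writing $h = da$, $h' = da'$ with $\gcd(a,a')=1$ and estimating $\sum_d R(d)T(d)$ for appropriate divisor-restricted sums $R(d),T(d)$) gives an off-diagonal contribution of size $\mathcal{O}(N^{-\varepsilon})$ or better.

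The main obstacle is the off-diagonal estimate: no additive-combinatorial hypothesis on $(a_n)$ is assumed, so the bound must come purely from the Fourier decay of $g$ together with the integer-valuedness of $(a_n)$. The factor $N^{1-\beta}$ of slack provided by the assumption $\beta < 1$ is precisely what makes the variance estimate feasible, and the threshold $\varepsilon = (1-\beta)/2$ is the natural halfway split between the diagonal contribution $\mathcal{O}(N^{-(1-\beta)})$ and the total admissible budget.
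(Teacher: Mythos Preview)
Your diagonal estimate is fine and indeed gives $\mathcal{O}(N^{-2\varepsilon})$. The gap is in the off-diagonal. Your bound $|I(h,h')|\ll \gcd(h,h')^2/|hh'|$ uses only the decay $|\widehat g(k)|\ls 1/(\pi|k|)$ and discards the plateau bound $|\widehat g(k)|\ls 2s/N^{\beta}$. Even granting the strongest gcd-sum input (Dyer--Harman / G\'al), one has $\sum_{h,h'}r(h)r(h')\gcd(h,h')^2/|hh'|\ls \sum_{h,h'}r(h)r(h')\gcd(h,h')/\sqrt{|hh'|}\ll N^{3+o(1)}$, which after dividing by $N^{4-2\beta}$ yields only $N^{2\beta-1+o(1)}$; this exceeds $1$ once $\beta>1/2$. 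Patching with the trivial $L^2$ bound via $\min\{A,B\}\ls\sqrt{AB}$ recovers a factor $\sqrt{s/N^{\beta}}$ and improves the off-diagonal to $N^{3\beta/2-1+o(1)}$, but this is still $\gg N^{-\varepsilon}$ for every $\beta>1/2$. So as written your argument cannot close for $\beta\in[1/2,1)$.

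What the paper does differently is to keep the full factor $s/N^{\beta}$ in the Fourier estimate. Writing $m=da$, $n=db$ with $\gcd(a,b)=1$, one bounds $\sum_{t\neq 0}|c_{at}||c_{bt}|$ by splitting at the threshold $|t|\asymp N^{\beta}/(s\max(a,b))$: below it one uses $|c_k|\ls 2s/N^{\beta}$, above it the $1/|k|$ decay. This yields
\[
\sum_{\substack{i,j\neq 0\\ im=jn}}|c_ic_j|\ \ll\ \frac{s\log N}{N^{\beta}}\cdot\frac{\gcd(m,n)}{\sqrt{|mn|}},
\]
i.e.\ the \emph{first} power of $s/N^{\beta}$, not the square root. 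Combined with the gcd-sum bound $\sum_{m,n}r_N(m)r_N(n)\gcd(m,n)/\sqrt{mn}\ll N^{3+o(1)}$ (Dyer--Harman, via $\sum_m r_N(m)^2\ls N^3$), this gives variance $\ll N^{\beta-1+o(1)}\ll N^{-\varepsilon}$ for all $\beta<1$. Your observation $\sum_h r(h)/|h|\ll N\log N$ is correct but does not replace the gcd-sum estimate here; the essential missing ingredient is the sharper pointwise bound on $I(h,h')$.
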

\begin{proof}
The proof goes along the lines of \cite{all}. 
We use the Fourier series expansion
\[ \mathds{1}_{[-\tfrac{s}{N^{\beta}},\tfrac{s}{N^{\beta}}]}(x) \sim \sum_{n=-\infty}^{+\infty}c_n e(nx),  \]
where
\[ c_0 = \frac{2s}{N^{\beta}} \qquad \text{ and } \qquad c_n = \frac{1}{\pi n}\sin\Big(\frac{2\pi n s}{N^{\beta}}\Big), \quad n\neq 0.   \]
Using Parseval's identity we get
    \begin{equation}      \int_{0}^{1}\!\left(R_2(\beta;s,N,x) - \frac{2(N-1)s}{N}\right)^2\!\!\mathrm{d}x =
        \frac{1}{N^{4-2\beta}}\hspace{-4mm}\sum_{m,n \in \mathbb{Z} \setminus \{0\}}\hspace{-2mm}r_N(m)r_N(n)\hspace{-2mm}\sum_{\substack{i,j \in \mathbb{Z} \setminus \{0\}\\ im = jn}}\hspace{-4mm}c_{i}c_{j} \label{parseval}  
    \end{equation}
    where 
\[r_N(m) = \# \{1 \ls i, j \ls N, a_i - a_j = m\}, \quad m\gs 1 \]
denotes the number of representations of $m\gs 1$ as a difference of two elements of $(a_i)_{i\ls N}$. It can now be shown as in \cite{all} that for some absolute constant $C>0,$
\[ \sum_{\substack{i,j \in \mathbb{Z} \setminus \{0\}\\ im = jn}} \lvert c_{i}c_{j}\rvert   \ls C (\log N) \frac{s}{N^{\beta}} \frac{\gcd(m,n)}{\sqrt{\lvert mn \rvert}}, \quad N\gs 1. \]
Since 
\[\sum_{m \gs 1} r_N(m)^2 = \#\{i,j,k,\ell \ls N: a_i - a_j = a_k - a_\ell \} \ls N^3,\]
we can use the gcd\,--\,sum estimates from \cite[Lemma 1]{all} (in fact, the result from \cite{dyer_harman} suffices) 
to show that
\begin{equation*}\label{gcd_sum}\frac{1}{N^3}\sum_{m,n\gs 1} r_N(m) r_N(n)\frac{\text{gcd}(m,n)}{\sqrt{m n}} 
\ll \exp\left(\frac{c \sqrt{(\log N) \log \log \log N}}{\sqrt{\log \log N}}\right) \ls N^{\varepsilon}\end{equation*}
which proves the desired statement.
\end{proof}
The proof of Theorem \ref{weak01law} can be completed by arguments identical to the ones  used at the end of Theorem \ref{stronger_not_for_one}. We let  $\gamma >1/\varepsilon$ and consider the sequence $B_M := \left\lceil{M^{\gamma}}\right\rceil,\, M\gs 1.$ We  use 
Chebyshev's inequality and the first Borel\,--\,Cantelli Lemma  to deduce
that
\[\lim_{M \to \infty} R_2(\beta;s,B_M,x) = 2s\]
for almost all $x$ (where the zero\,--\,measure set  depends on $s$). For an arbitrary integer $N\gs 1$, let $M=M_N$ be the unique index such that $B_M \ls N < B_{M+1}.$ Since 
\[ \frac{B_M}{B_{M+1}}R_2\Big(\beta;\frac{B_Ms}{B_{M+1}} , B_M,x\Big) \ls R_2(\beta;s,N,x) \ls \frac{B_{M+1}}{B_M} R_2\Big(\beta;\frac{B_{M+1}s}{B_M} , B_{M+1}, x \Big)  \]
holds for any value of $x$ and also $\lim\limits_{M \to \infty} \dfrac{B_M}{B_{M+1}} = 1,$  we get 
\[\lim_{N \to \infty} R_2(\beta;s,N,x) = 2s \] for all $x$ in a set of full Lebesgue measure. It remains to show that the almost sure convergence is actually true for all $s > 0$; this follows if we consider a dense countable set in $\mathbb{R}_+$ and employ the monotonicity of $R_2(\beta;s,N,x)$ as a function of $s>0$.

\section{Proof of Theorems \ref{thm7} and \ref{higher_order_weak}}
In this last section we prove the results relevant to weak correlations of higher orders. We start with the proof Theorem \ref{higher_order_weak} and then use some of the arguments involved to deduce Theorem \ref{thm7}.  \par
Throughout this section, we shall use the following notation. Given $k \gs 2$, a  rectangle $\cR\subseteq \bR^{k-1}$  and  $0\ls \beta \ls 1$, we define a new correlation counting function by \vspace{-2mm}
\begin{equation}\label{rkstardef}
R_{k}^{*}(\beta;\mathcal{R},N)\!=\! \frac{1}{N^{k - (k-1)\beta}}
 \#\!\!\left\{i_1,\ldots,i_k\ls N\!\!: N^{\beta} ( (\!(x_{i_1}- x_{i_2})\!),\ldots, (\!(x_{i_1} - x_{i_{k-1}})\!) )\!\in\!\mathcal{R} \right\}\!.
\end{equation}
That is, in the definition of $R_k^*$ we allow indices to be equal, in contrast to the definition of $R_k$ where all indices have to be pairwise distinct.  Also whenever $a<b$ we write  
\[y_i(a,b) = y_i(a,b,N,\beta) = \#\big\{ j\ls N : a\ls N^{\beta}(\!( x_i - x_j)\!) \ls b  \big\}, \quad  i \ls N.\] 
Under this notation, we observe that for the rectangle $\cR = [a_1, b_1]\times \ldots \times [a_{k-1},b_{k-1}]$ we have
\begin{equation}\label{R_k_easy_form}R_k^*(\beta;\mathcal{R},N) = \frac{1}{N^{k-(k-1)\beta}}\sum_{i\ls N} y_i(a_1,b_1)\cdot  \ldots \cdot  y_i(a_{k-1},b_{k-1}).\end{equation}
As we shall soon see, this form makes $R_k^*$ much easier to handle than $R_k$. 

The following lemma states that for $\beta < 1$, under the assumption of Poissonian $(k,\beta)$\,--\,correlations, the asymptotic size of $R_k$ is the same as that of $R_k^{*}$ as $N \to \infty$. In the proof of the lemma we make use of certain facts on $R_k$ and $R_k^*$ that appear in  \cite{our_other_paper}. There are only minor modifications, and these are due to the fact that here we deal with weak correlations; we have chosen to refer the interested reader to the proofs in \cite{our_other_paper} and explain here briefly where the minor differences come from.

\begin{lem}\label{equivalent_k_correl}
Let $\xn$ be a sequence and $k \gs 2$. The following are equivalent.\newline   {\it (i)} $\xn$ has Poissonian $(k,\beta)$\,--\,correlations. \newline
   {\it (ii)} For all closed rectangles $\mathcal{R} \subseteq \mathbb{R}^{k-1}$ of the form $[a_1,b_1]\times \ldots \times [a_{k-1},b_{k-1}]$ we have 
    \begin{equation}\label{higher_order_alter}\lim_{N \to \infty} R_k^{*}(\beta;\mathcal{R},N) = \lambda(\mathcal{R}).\end{equation}
  {\it (iii)} For all cubes $\mathcal{C} := [a,b]^{k-1}, a,b \in \mathbb{R}$ we have
  \begin{equation}\label{smaller_equal_suffices}\limsup_{N \to \infty} R_{k}^{*}(\beta;\mathcal{C},N) \ls \lambda(\mathcal{C}).\end{equation}
\end{lem}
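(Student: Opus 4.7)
The proof parallels the arguments for the classical case $\beta=1$ in \cite{our_other_paper}, with modifications reflecting the weaker normalisation factor $N^{k-(k-1)\beta}$.

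For (i) $\Leftrightarrow$ (ii) I would show that the discrepancy $R_k^*(\beta;\mathcal{R},N) - R_k(\beta;\mathcal{R},N) = o(1)$ as $N\to\infty$. The natural way to proceed is to expand this difference as a sum over partitions $P$ of $[k]$ having at least two indices identified. For each such $P$ with $m<k$ distinct blocks, the resulting sum reduces to an $R_m^*$-type count, possibly further restricted by the requirement that $0\in [a_{j-1},b_{j-1}]$ for every coordinate where index $1$ is identified with some index $j$. Such a count is dominated by a product of $y_i$-sums, themselves bounded above by $N^{2-\beta}$ times the relevant interval length. After normalising by $N^{k-(k-1)\beta}$, each partition contributes at most $\mathcal{O}(N^{-(k-m)(1-\beta)})$, which vanishes precisely because $\beta<1$. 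Summing over the finitely many partitions gives the estimate in both directions.

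The implication (ii) $\Rightarrow$ (iii) is immediate. For (iii) $\Rightarrow$ (ii), I would first obtain the upper bound on a general rectangle $\mathcal{R}$ by covering it with finitely many cubes $\mathcal{C}_j$ satisfying $\sum_j \lambda(\mathcal{C}_j)\ls \lambda(\mathcal{R})+\varepsilon$. The pointwise inequality $\one_{\mathcal{R}}\ls \sum_j \one_{\mathcal{C}_j}$ transfers directly to $R_k^*$, and applying (iii) cube-by-cube yields $\limsup R_k^*(\beta;\mathcal{R},N)\ls \lambda(\mathcal{R})+\varepsilon$.

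The matching lower bound is the delicate step and proceeds through a bootstrap via pair correlations. Applying Jensen's inequality to $x\mapsto x^{k-1}$ on the expression \eqref{R_k_easy_form}, one deduces
\[R_k^*(\beta;[a,b]^{k-1},N)\gs R_2^*(\beta;[a,b],N)^{k-1},\]
so (iii) forces $\limsup R_2^*(\beta;[a,b],N)\ls b-a$ for every $a<b$. Comparing $R_2^*$ with the classical $R_2$ (they differ by $o(1)$ when $\beta<1$) gives $\limsup R_2(\beta;s,N)\ls 2s$. Combining this with the identity $I_\beta(s,N)=\int_0^s R_2(\beta;\sigma,N)\,\mathrm{d}\sigma+o(1)$ (implicit in the proof of Lemma \ref{integral_equivalence}) via a monotone dissection of $[0,s]$ yields $\limsup I_\beta(s,N)\ls s^2$, which together with the unconditional bound $I_\beta(s,N)\gs s^2$ of Lemma \ref{integral_lower_bound} forces $I_\beta(s,N)\to s^2$. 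Hence by Lemma \ref{integral_equivalence} we have $R_2(\beta;s,N)\to 2s$ for all $s>0$, and so $R_2^*(\beta;[a,b],N)\to b-a$ for every interval (using additivity and the symmetry $R_2^*([a,b])=R_2^*([-b,-a])$). Substituting this convergence back into Jensen's inequality delivers $\liminf R_k^*(\beta;[a,b]^{k-1},N)\gs (b-a)^{k-1}$, matching the upper bound on cubes; packing an arbitrary rectangle from inside with disjoint cubes exhausting $\lambda(\mathcal{R})$ up to $\varepsilon$ then provides the required lower bound for rectangles. The main obstacle is this bootstrap itself, which forces one to shuttle carefully between $R_k^*$ on cubes, $R_2^*$ on intervals, $R_2$ on positive scales, and the integral $I_\beta$, with $\beta<1$ being used essentially both in the partition expansion and in the identification $I_\beta(s,N)=\int_0^s R_2(\beta;\sigma,N)\,\mathrm{d}\sigma+o(1)$.
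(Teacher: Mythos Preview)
The real gap lies in your (iii) $\Rightarrow$ (ii). The cubes $\mathcal{C}=[a,b]^{k-1}$ in (iii) are \emph{diagonal} cubes---the same interval in every coordinate---and such cubes cannot efficiently cover or pack a general rectangle: any diagonal square $[a,b]^2$ meeting $[0,1]\times[100,101]$ must have side at least $99$, so no covering by diagonal squares has total area close to $1$, and no diagonal square fits inside at all. Thus both your upper-bound covering and your lower-bound packing fail. The paper obtains the upper bound on a general rectangle $\mathcal{R}=\prod_i[a_i,b_i]$ by applying H\"older's inequality to the product form \eqref{R_k_easy_form},
\[R_k^*(\beta;\mathcal{R},N)\ls \prod_{i=1}^{k-1} R_k^*(\beta;[a_i,b_i]^{k-1},N)^{1/(k-1)},\]
which passes to diagonal cubes without any geometric covering. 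For the lower bound the paper argues by contradiction: embed a putative bad rectangle $\mathcal{R}_0$ in a large diagonal cube $\mathcal{C}_s=[-s,s]^{k-1}$, partition $\mathcal{C}_s$ into $\mathcal{R}_0$ and finitely many other rectangles, and combine the strict liminf deficit on $\mathcal{R}_0$ with the H\"older upper bound on the remaining pieces to force $\liminf R_k^*(\beta;\mathcal{C}_s,N)<(2s)^{k-1}$; the contradiction then comes from the integral lower bound $\int_{[0,s]^{k-1}} R_k^*(\beta;\mathcal{R}_{\mathbf{t}},N)\,\mathrm{d}\mathbf{t}\gs s^{2(k-1)}$ analogous to \cite[Proposition~11]{our_other_paper}.

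Your bootstrap through $R_2^*$ is correct on diagonal cubes and, once the H\"older upper bound is available, would in fact shortcut the paper's integral step: it yields $\lim R_k^*(\beta;\mathcal{C}_s,N)=(2s)^{k-1}$ directly, which already contradicts the partitioned liminf above. But the H\"older inequality is indispensable for passing from diagonal cubes to general rectangles; covering and packing cannot do this. (A minor separate point: in your (i) $\Leftrightarrow$ (ii), the bound $\sum_i y_i(a,b)\ls N^{2-\beta}(b-a)$ is not unconditional---you need the hypothesis to ensure the lower-order correlation counts stay bounded, as the paper records in \eqref{finite_limsup}.)
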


\begin{proof} We start by proving the equivalence of (i) and (ii). 
We observe that for any rectangle $\mathcal{R}\subseteq \mathbb{R}^{k-1},$ we have 
\begin{equation}\label{compare_R_k*}R_{k}(\beta;\cR,N) \ls R_{k}^*(\beta;\cR,N) \ls R_{k}(\beta;\cR,N) + \sum_{m= 1}^{k-1}\Big(\frac{1}{N^{1-\beta}}\Big)^{k-m}b_mR_{m}(\beta;\cR_m,N)
\end{equation}
where $R_1(s;N) := 1$, for each $1\ls m \ls k-1, \cR_{m}\subseteq \mathbb{R}^{m-1}$ is an $(m-1)$\,--\,dimensional rectangle depending on $\cR,$ and $b_1,\ldots, b_{k-1} \in \mathbb{N}$ are constants depending only on $k$. The proof of \eqref{compare_R_k*} is essentially the same as that of \cite[Proposition 7]{our_other_paper}, which deals with standard correlation functions (i.e. when $\beta = 1$). The only differences are the appearance of the coefficients $(N^{-(1-\beta)})^{k-m}$, that come from the different scaling factor for weak correlations, and the fact that \eqref{compare_R_k*} is about arbitrary  rectangles $\cR$ rather than rectangles symmetric with respect to the axes.

 Next, we observe that whenever ${\rm (i)}$ or ${\rm (ii)}$ holds, arguing  analogously to \cite[Theorem 3]{our_other_paper}, we can show that for $m \ls k-1$ we have
\begin{equation}\label{finite_limsup}\limsup_{N\to\infty}R_{m}(\beta;\mathcal{S},N) < \infty \quad \text{ for any rectangle } \mathcal{S}\subset \mathbb{R}^{m-1}. \end{equation}
Combining \eqref{compare_R_k*} and \eqref{finite_limsup} completes the proof of this equivalence.

Since (ii) $\Rightarrow$ (iii) is trival, it remains to show (iii) $\Rightarrow$ (ii).
So let us assume that \eqref{smaller_equal_suffices} holds and let $ \cR = [a_1,b_1]\times\ldots\times [a_{k-1},b_{k-1}] $ be an arbitrary rectangle in $\mathbb{R}^{k-1}$.
Applying the Hölder inequality with exponents $p_i = 1/(k-1), ( 1\ls i \ls k-1)$ to \eqref{R_k_easy_form} we obtain
\begin{align}
R_{k}^{*}(\beta;\mathcal{R},N) &= \frac{1}{N^{k-(k-1)\beta}}\sum_{i\ls N} y_i(a_1,b_1)\cdot \ldots \cdot y_i(a_{k-1},b_{k-1})\\
&\ls \frac{1}{N^{k-(k-1)\beta}}\Bigg(\sum_{i\ls N} y_i(a_1,b_1)^{k-1}\Bigg)^{\frac{1}{k-1}}\hspace{-2mm}\cdot \ldots \cdot  \Bigg(\sum_{i\ls N} y_i(a_{k-1},b_{k-1})^{k-1}\Bigg)^{\frac{1}{k-1}} \label{hölder_on_yi}\\
&= R_{k}^{*}(\beta;\mathcal{C}_{a_1,b_1},N)^{\frac{1}{k-1}} \cdot \ldots \cdot  R_{k}^{*}(\beta;\mathcal{C}_{a_{k-1},b_{k-1}},N)^{\frac{1}{k-1}},
\end{align}
where we set $\mathcal{C}_{a_i,b_i}=[a_i, b_i]^{k-1}$ for each $1\ls i \ls k-1.$ Using \eqref{smaller_equal_suffices} we are able to deduce that
\begin{equation}\label{smaller_than}\limsup_{N \to \infty} R_{k}^{*}(\beta;\mathcal{R},N)  \ls \lambda(\mathcal{R})\quad \text{ for any rectangle } \mathcal{R}\subset \mathbb{R}^{k-1}.\end{equation}
For the remaining part of the proof, it will be convenient to adopt the following notation: given any $\bt = (t_1,\ldots, t_{k-1})\in \mathbb{R}^{k-1}$ with $t_i > 0, i=1,\ldots, k-1$ we shall write 
\[\cR_{\bt} = [-t_1, t_1] \times \ldots \times [-t_{k-1}, t_{k-1}] \subseteq \mathbb{R}^{k-1} \]
for the rectangle in $\mathbb{R}^{k-1}$ that is symmetric with respect to each of the axes and has a vertex at the point $\bt.$ \par Continuing with the proof of the implication (iii) $\Rightarrow$ (ii), we assume for contradiction that \eqref{higher_order_alter} does not hold. Then by \eqref{smaller_than} there must exist \mbox{$\ba = (a_1,\ldots,a_{k-1}),$} \mbox{$\bb=(b_1,\ldots,b_{k-1}) \in \mathbb{R}^{k-1}$} such that for the rectangle $\cR_{[\ba,\bb]}:=[a_1,b_1]\times \ldots\times [a_{k-1},b_{k-1}]$ we have 
 \begin{equation}\label{to_contradict}
 \liminf_{N \to \infty}R_{k}^{*}(\beta;\mathcal{R}_{[\ba,\bb]},N) <  \lambda(\mathcal{R}_{[\ba,\bb]}).\end{equation}
Setting $s  = \max\{\vert a_i\vert, \vert b_i\vert  : i=1,\ldots, k-1 \}$ and $\bs = (s, \ldots, s)$, we can find $n \in \mathbb{N}$ and rectangles $\mathcal{R}_{j}, 1\ls j\ls  n$  such that the cube $\mathcal{C}_s  := [-s, s]^{k-1}$ satisfies
\[ \mathcal{C}_s = \mathcal{R}_{[\ba,\bb]} \cup \bigcup_{j = 1}^n \mathcal{R}_{j} \]
and all rectangles  in the right-hand side have disjoint interiors. Using \eqref{smaller_than} on every $\mathcal{R}_j$ and \eqref{to_contradict}, we obtain
\begin{align}
\liminf_{N \to \infty}R_k^{*}(\beta; \mathcal{C}_s ,N)  &\ls  \liminf_{N \to \infty}R_{k}^{*}(\beta;\mathcal{R}_{[\ba,\bb]},N)+
    \sum_{j =1}^n \limsup_{N \to \infty} R_{k}^{*}(\beta;\mathcal{R}_{j},N)
    \\&< (2s)^{k-1}.\label{partitioned_liminf}
\end{align}
Consequently, there exist an $\eta>0$ and a sequence $(N_r)_{r \in \mathbb{N}}\subseteq \mathbb{N}$ such that 
\begin{equation}
    R_k^{*}(\beta; \mathcal{C}_s ,N_r) < (2s)^{k-1} - \eta \qquad \text{for all } r\gs 1.
\end{equation}
 Observe that  there exists a cube $\mathcal{C}_0 = [s-\delta, s]\times \ldots \times [s- \delta, s]  \subseteq \mathbb{R}^{k-1}$ with positive $(k-1)$\,--\,dimensional Lebesgue measure such that for any $r\gs 1,$
  \begin{equation}\label{strictly_smaller}R_{k}^{*}(\beta;\cR_\bt,N_r) < (2t_1)(2t_2)\cdots(2t_{k-1})- \frac{\eta}{2}\end{equation}
for any $\bt = (t_1,\ldots, t_{k-1}) \in \mathcal{C}_0$. \par
 We shall obtain the desired contradiction by deriving a lower and an upper bound for the integral \[ \int_{[0,s]^{k-1}} R_k^{*}(\beta,\cR_\bt\,,N) \,\mathrm{d}\bt. \] Arguing as in \cite[Proposition 11]{our_other_paper}, we can show that
 \begin{equation}\label{prop11}\int_{[0,s]^{k-1}} R_k^{*}(\beta,\cR_\bt\,,N) \,\mathrm{d}\bt \gs s^{2(k-1)} \quad \text{ for any } N \gs 1.\end{equation}
Let $\varepsilon > 0$ be arbitrary. For any integer $M  \in \mathbb{N},$ we can split the integration domain $[0,s]^{k-1}$ in $M^{k-1}$ cubes of the form
\[\mathcal{C}_{\bj} := \left[\frac{(j_1-1) s}{M},\frac{j_1 s}{M}\right]\times 
\ldots  \times \left[\frac{(j_{k-1}-1) s}{M},\frac{j_{k-1}s}{M}\right],  \]
where $\bj = (j_1, \ldots, j_{k-1}) \in [M]^{k-1}$, and if $M\gs 1$ is large enough, then for all indices $\bj \in [M]^{k-1}$ we have 
\begin{equation}\label{small_diam}
\lvert \lambda(\mathcal{R}_{\bt_1}) - \lambda(\mathcal{R}_{\bt_2})\rvert \ls  \frac{\varepsilon}{2} \qquad \text{ for any } \bt_1, \bt_2 \in \mathcal{C}_{\bj}.
\end{equation}
For each of the cubes $\mathcal{C}_{\bj}$ defined above, we denote its top corner by  $$\bc_{\bj} =  \left(\frac{j_1s}{M},\frac{j_2s}{M},\ldots, \frac{j_{k-1}s}{M}\right) \in \mathbb{R}^{k-1}.$$  In view of \eqref{smaller_than}, we know that for all $r\gs 1$ sufficiently large, we have
\begin{equation} \label{corner_bound}
    R_k^{*}(\beta;\mathcal{R}_{\bc_\bj},N_r) \ls \lambda(\mathcal{R}_{\bc_\bj}) + \frac{\varepsilon}{2} \qquad \text{for all } \bj\in [M]^{k-1}.
\end{equation}
(Note that the values of $r$ can be chosen independently of $\bj$ since the number of considered $\bj$'s is finite.) Given $\bt \in [0,s]^{k-1}$ we write $\mathbf{c}(\bt)$ for the top corner $\bc_{\bj}$ of the cube $\mathcal{C}_{\bj}$ that contains the point $\bt.$ Then for all $r\gs 1$ large enough, we have 
\begin{align}R_k^{*}(\beta,\cR_\bt,N_r)
&\ls R_k^{*}(\beta,\cR_{\bc(\bt)},N_r) \\ & \ls \lambda(\mathcal{R}_{\bc_\bj}) + \frac{\varepsilon}{2} \quad \qquad \text{ (by \eqref{corner_bound})}\label{all_small_enough} \\[1ex]  & \ls \lambda(\mathcal{R}_{\bt}) + \varepsilon \quad \qquad \, \, \, \text{ (by \eqref{small_diam})}  \end{align}
for any $\bt \in [0,s]^{k-1}$.
Combining \eqref{strictly_smaller} and \eqref{all_small_enough}, we obtain
\begin{align}
\int_{[0,s]^{k-1}} R_k^{*}(\beta,\cR_\bt\,,N_r) \,\mathrm{d}\bt &= \int_{[0,s]^{k-1} \setminus \mathcal{C}_0} R_k^{*}(\beta;\cR_\bt\,,N_r) \,\mathrm{d}\bt 
+ \int_{\mathcal{C}_0} R_k^{*}(\beta;\cR_\bt\,,N_r) \,\mathrm{d}\bt 
\\&\ls s^{2(k-1)} + \varepsilon s^{k-1} - \eta\lambda(\mathcal{C}_0),
\end{align}
a contradiction to \eqref{prop11} when $\varepsilon$ is  sufficiently small.
 \end{proof}
 
Having established Lemma \ref{equivalent_k_correl}, we can now proceed to the proof of Theorem \ref{higher_order_weak}.

\begin{proof}[Proof of Theorem \ref{higher_order_weak}]
    We first prove that having Poissonian $(k,\beta)$\,--\,correlations is a property stronger than $\beta$\,--\,PPC. We then use this fact to prove that this is also stronger than $(k-1, \beta)$\,--\,correlations. We apply the Hölder inequality with exponents \mbox{$p = (k-1)/(k-2)$} and $q = k-1$ to obtain
    \begin{align}
    R_2^{*}(\beta;[a,b],N)^{k-1} &= \frac{1}{N^{2(k-1)-(k-1)\beta}}\Big(\sum_{i\ls N} y_i(a,b)\Big)^{k-1} \nonumber \\
    & \ls \frac{1}{N^{k-(k-1)\beta}}\sum_{i\ls N} y_i(a,b)^{k-1} = R_k^{*}(\beta;[a,b]^{k-1},N).  \label{hölder}
    \end{align}
    Assuming Poissonian $(k,\beta)$\,--\,correlations, the right-hand side of \eqref{hölder} tends to \newline $(b-a)^{k-1}$ as $N \to \infty$. Therefore, we have for all $a < b \in \mathbb{R}$ that
    \[\limsup_{N \to \infty} R_2^{*}(\beta;[a,b],N) \ls b-a,\]
    which  by Lemma \ref{equivalent_k_correl} implies $\beta$\,--\,PPC.

    Now we proceed to prove that $\xn$ also has Poissonian $(k-1,\beta)$\,--\,correlations.
    We make use of the well\,--\,known inequality
    \begin{equation}\label{chebyshev_cor}
        N\sum_{i\ls N} x_i^{k-1}   \, \gs\,   \Big(\sum_{i\ls N} x_i\Big)\Big(\sum_{i\ls N} x_i^{k-2} \Big)
    \end{equation}
    which holds for $N \gs 1, x_i \gs 0$ to show that
    \begin{align}R_k^*(\beta;[a,b]^{k-1},N) &= \frac{1}{N^{k-(k-1)\beta}}\sum_{i\ls N} y_i(a,b)^{k-1}
    \\&\gs \frac{1}{N^{2-\beta}}
    \sum_{i\ls N} y_i(a,b) \cdot \frac{1}{N^{k-1-(k-2)\beta}}\sum_{i\ls N} y_i(a,b)^{k-2}
\\   & = R_2^{*}(\beta;[a,b],N)\cdot R_{k-1}^*(\beta;[a,b]^{k-2},N).
    \end{align}
By Poissonian $(k,\beta)$\,--\,correlations and $\beta$\,--\,PPC, we can deduce that
\[\limsup_{N \to \infty} R_{k-1}^*(\beta;[a,b]^{k-2},N) \ls (b-a)^{k-2},\]
and in view of Lemma \ref{equivalent_k_correl} this completes the proof. 
 \end{proof}
 
\begin{xrem}
Note that Theorem \ref{higher_order_weak} holds for $\beta = 0$, the proof follows the same lines as above with the only difference being $s < \frac{1}{2}$. However for $\beta = 1$, this argumentation fails since in case of Poissonian $k$\,--\,correlations, $R_k^*(\mathcal{R},N)$ will not tend to $\lambda(\mathcal{R})$.
\end{xrem}

 \begin{proof}[Proof of Theorem \ref{thm7}]
The one direction of the statement, namely that Poissonian $(k,0)$\,--\,correlations imply uniform distribution, follows from  Theorem \ref{higher_order_weak}: for $\beta = 0$, we have proven that Poissonian $(k,0)$-correlations imply $0$\,--\,PPC, which in turn imply uniform distribution
by Theorem \ref{prop1}. \par 
We proceed to showing that uniform distribution implies Poissonian $(k,0)$\,--\,correlations.
In view of Lemma \ref{equivalent_k_correl}, it suffices to prove that for any cube $ \mathcal{C} = [a,b]^{k-1} \subseteq   [-\tfrac{1}{2},\tfrac{1}{2} ]^{k-1} $ we have
\begin{equation}\label{suff_thm7}\limsup_{N \to \infty} R_k^{*}(0;\mathcal{C} ,N)  \ls \lambda(\mathcal{C} ).\end{equation}
As expected, the proof goes along the lines of Theorem \ref{prop1}. We observe that
\begin{align*}
    R_k^{*}(0; \mathcal{C},N) - \lambda(\mathcal{C})  & \, =\, \frac{1}{N^k}\hspace{-3mm}\sum_{i_1,\ldots, i_k \ls N}\hspace{-3mm}\one_{[a,b]}(x_{i_1}-x_{i_2}) \cdots \one_{[a,b]}(x_{i_1}-x_{i_{k-1}})  - \lambda(\mathcal{C}) \\
    &=   \frac{1}{N}\sum_{n\ls N}\Bigg(\frac{1}{N}\sum_{m\ls N} \one_{[x_{n}-b,  x_{n}-a]} (x_{m})\Bigg)^{k-1}- (b-a)^{k-1}.
\end{align*}
Since 
\begin{align*} 
\frac{1}{N}\sum_{m\ls N} \one_{[x_{n}-b,  x_{n}-a]} (x_{m})
\ls b-a + D_N,\quad  n = 1,\ldots, N, 
\end{align*}
with $D_N$ denoting the discrepancy of $\xn$, we deduce that
\[ R_k^{*}(0;\mathcal{C} ,N) - \lambda(\mathcal{C} )
\ls (b-a + D_N)^{k-1} - (b-a)^{k-1} =  \mathcal{O}_{a,b}( D_N ).\]
Assuming that $\xn$ is uniformly distributed mod $1$, we have $D_N  \to  0$ as $N\to \infty $ and hence \eqref{suff_thm7} follows. This concludes the proof.\end{proof}

\subsection*{Acknowledgements} We would like to thank Niclas Technau for introducing us to the notion of weak Poissonian pair correlations. We also thank Christoph Aistleitner for many valuable discussions.

\normalsize


\begin{thebibliography}{HD82}
\bibitem{alp}
 C. Aistleitner, T. Lachmann and F. Pausinger, {\it  Pair correlations and equidistribution,} J. Number Theory 182, 206–220, (2018).

\bibitem{all}
C. Aistleitner, G. Larcher, M. Lewko,
{\it Additive Energy and the Hausdorff dimension of the exceptional set in metric pair correlation problems.  With an appendix by Jean Bourgain},  Isr. J. Math. 222, 463\,--\,485 (2017). 

\bibitem{bt}
M.V. Berry, M. Tabor, {\it Level clustering in the regular spectrum}, Proc. Roy. Soc.
A 356, 375\,--\,394 (1977)

\bibitem{dyer_harman}
T. Dyer, G. Harman,  {\it Sums Involving Common Divisors},  Journal of The London Mathematical Society\,--\,second Series, 34, 1\,--\,11, (1986).

\bibitem{elbaz}
D. El-Baz, J.Marklof, I. Vinogradov, {\it The two-point correlation function of the fractional parts of $\sqrt{n}$ is Poisson}, Proceedings of the American Mathematical Society, Vol. 143, No. 7, 2815\,--\,2828 (2015)

\bibitem{elkies}
 N. D. Elkies, C. T. McMullen, {\it Gaps in $\sqrt{n} \bmod 1$ and ergodic theory}, Duke Math. J. 123, no. 1, 95\,--\,139 (2004)

\bibitem{table}
I. S. Gradshteyn, I. M. Ryzhik, {\it Table of Integrals, Series and Products, Seventh Edition,} Elsevier Academic Press (2007) 

\bibitem{sigrid}
S. Grepstad, G. Larcher, {\it On Pair Correlation and Discrepancy,} Arch. Math. (Basel) 109, no. 2, 143\,--\,149 (2017)

\bibitem{our_other_paper}
M. Hauke, A. Zafeiropoulos, {\it Poissonian Correlations of higher orders,} pre\,--\,print: arXiv:2107.06523 
 
\bibitem{heathbrown}
 D. R. Heath-Brown, {\it  Pair correlation for fractional parts of $\alpha n^2$}, Math. Proc. Cambridge	Philos. Soc. 148, 385–407 (2010).

\bibitem{hinrichs}
A. Hinrichs, L. Kaltenböck, G. Larcher, W. Stockinger, M. Ullrich,
{\it On a multi-dimensional Poissonian pair correlation concept and uniform distribution,} Monatsh. Math. 190, 333\,--\,352 (2019).

\bibitem{kuipers}
L. Kuipers, H. Niederreiter, {\it Uniform Distribution of Sequences,}
Wiley, 1974.

\bibitem{kr}
P. Kurlberg, Z. Rudnick, {\it The distribution of spacings between quadratic residues, } Duke J. Math., 100: 211\,--\,242, (1999).

\bibitem{lutsko}
C. Lutsko, {\it Long-Range Correlations of Sequences Modulo 1,}
pre\,--\,print: arXiv:2007.09292

\bibitem{lst}
C. Lutsko, A. Sourmelidis and N. Technau,
      {\it Pair Correlation of the Fractional Parts of $\alpha n^\theta$}, 
      pre\,--\,print: arXiv: 2106.09800

\bibitem{marklof}
J. Marklof, \emph{ Pair correlation and equidistribution on manifolds,} Monatsh. Math. 191, 279\,--\,294 (2020).

\bibitem{montgomery}
H. Montgomery, \emph{Ten Lectures at the Interface of Harmonic Analysis and Number Theory,}
American Mathematical Society, 1994.

\bibitem{nair}
R. Nair, M. Pollicot, {\it Pair Correlations of Sequences in higher dimensions,} Israel J. Math. 157, 219\,--\,238, (2007).	

\bibitem{ripley}
B. D. Ripley, {\it Spatial Statistics,} Wiley Series on Probability and Statistics, ISBN: 978-0-471-69116-7 (2004) 

\bibitem{rs}
Z. Rudnick, P. Sarnak, {\it The pair correlation function of fractional parts of polynomials,} Comm. Math. Phys., 194 (1) 61\,--\,70, (1998).
	
\bibitem{rz}	
Z. Rudnick, A. Zaharescu, \emph{The distribution of spacings between fractional
parts of lacunary sequences}, Forum Math., 14(5) 691\,--\,712, (2002).

\bibitem{rz2}
Z. Rudnick, A. Zaharescu, \emph{A metric result on the pair correlation of fractional parts of sequences,} Acta Arith., 89(3) 283\,--\,293, (1999).

\bibitem{steinerberger2}
S. Steinerberger, \emph{Localized Quantitative Criteria for Uniform Distribution}, Acta Arith.,
180, 183\,--\,199 (2017)

\bibitem{steinerberger}
S. Steinerberger, {\it Poissonian Pair Correlation in Higher Dimensions,} J. Number Theory, 208: 47\,--\,58, (2020).  

\bibitem{technau_walker}
N. Technau, A. Walker, {\it On the triple correlations of fractional parts of $n^2\alpha$},  pre-print: arXiv:2005.01490

\bibitem{walker}
A. Walker, \emph{The primes are not metric poissonian,} Mathematika, 64(1), 230\,--\,236, (2018). 
	
\bibitem{sw}
C. Weiss, T. Skill, {\it Sequences with almost Poissonian Pair Correlations,} pre-print: arXiv:1905.02760.

	\baselineskip=17pt
	
	
	
	
\end{thebibliography}
\end{document}